\DeclareMathAlphabet\mathzapf{T1}{pzc}{mb}{it}
\pgfplotsset{my style/.append style={axis x line = middle, axis y line = middle, xlabel={$\theta$}, axis equal}}
\pgfplotsset{compat=1.18}
\theoremstyle{plain}
\newtheorem{theorem}{Theorem}[section]
\newtheorem{proposition}[theorem]{Proposition}
\newtheorem{lemma}[theorem]{Lemma}
\newtheorem{corollary}[theorem]{Corollary}
\newtheorem{remark}[theorem]{Remark}
\title{On the average least negative Hecke eigenvalue}
\author{Jackie Voros}
\email{jackie.voros@bristol.ac.uk}
\begin{document}

\begin{abstract}
We show that the first sign change of Hecke eigenvalues of classical newforms has a finite mean, which we also compute. We distinguish between the first negative \textit{prime} Hecke eigenvalue, and the first negative Hecke eigenvalue. This problem can be considered to be an analogue of the least quadratic non-residue problem, of which the average was explored by Erd\H{o}s in 1961. In fact, the average least negative prime Hecke eigenvalue has the same value as the average least quadratic non-residue, under GRH. To compute these averages, we develop large sieve inequalities that are uniform in both the weight and level aspect.


\end{abstract}

\maketitle

\section{Introduction and statement of results}
Let $H_k^*(N)$ denote the finite set of newforms of weight $k \geq 2$ for $\Gamma_0(N)$, where $k$ is an even integer, and $N \geq 1$ is an integer. Then $f \in H_k^*(N)$ is a normalised cuspform that is new at level $N$, and is also a Hecke eigenform. We restrict to trivial character to ensure that the Hecke eigenvalues are real. Hence $f$ has the Fourier series expansion,

\begin{equation*}
    f(z) = \sum_{n = 1}^{\infty} \lambda_f(n) n^{(k-1)/2}e(nz), \quad e(z)=e^{2\pi i z}, \quad \lambda_f(1)=1,
\end{equation*}

where the Hecke eigenvalues $\lambda_f(n)$ have been normalised so that Deligne's bound implies $|\lambda_f(p)| \leq 2$.

The sign changes of Hecke eigenvalues is a topic of great interest. Knopp, Kohnen and Pribitkin \cite{KnoKohPri} have shown that any non-trivial cusp form with real coefficients has infinitely many sign changes. This can be seen from Landau's classical theorem on Dirichlet series, and the properties of the associated $L$-functions.

Indeed we now have stronger assertions on properties of the sign changes of $\lambda_f(n)$ in the form of multiplicity one theorems. Kowalski, Lau, Soundararajan and Wu \cite[Theorem 4]{klsw} show that the signs of the Hecke eigenvalues determine the eigenform itself. This result stems from the observation that for some weight $k$, level $N$ newform, there is asymptotically no bias in the signs of its eigenvalues.

The question of where the first sign change of $\lambda_f(n)$ occurs has been studied by several authors. Let $n_f$ be the least integer such that $\lambda_f(n_f) < 0$. An upper bound on $n_f$ was investigated first by Kohnen and Sengupta \cite{KohSen}. With Iwaniec \cite{IwaKohSen} they later improved their result, and Kowalski, Lau, Soundararajan and Wu \cite{klsw} introduced new ideas, which Matom\"aki \cite{Mat} optimised, to get the bound

\begin{equation}
    \label{eq:nf}
    n_f \ll Q^{3/8},
\end{equation}

where $Q:=k^2N$ is defined as the analytic conductor of $f$. This upper bound is so far the best known, although it seems far from optimal. The methods used above are connected with the zero-free region of the associated $L$-function, $L(s,f)$. As mentioned in the papers referenced above, assuming the Generalised Riemann Hypothesis (GRH) for $L(s,f)$, one can show

\begin{equation}
    \label{eq:nfGRH}
    n_f \ll (\log Q)^2.
\end{equation}

We think of this question as being analagous to the least quadratic non-residue problem, and we can clearly see this when considering the conditional estimate of the least quadratic non-residue, see Linnik \cite{Lin} and Ankeny \cite{Ank}. However, we cannot make such an immediate parallel due to the difference in arithmetic structure of Hecke eigenvalues and the Legendre symbol: the Legendre symbol is totally multiplicative, while the Hecke eigenvalues are only multiplicative.

For this reason, we also consider the least negative \textit{prime} Hecke eigenvalue, $p_f$. That is, the least prime $p_f$ such that $\lambda_f(p_f) < 0$. As will be discussed in Section \ref{sec:motivation}, we actually find that we have the same bound as in \eqref{eq:nfGRH} for $p_f$ under GRH -- although it should be noted, as with the least quadratic non-residue, that the true value of both $n_f$ and $p_f$ is expected to be less than what can be shown via GRH.

It is certainly not necessary that $n_f$ and $p_f$ coincide. Just from the definition we can see that $n_f \leq p_f$, but in practice one can find that we have a strict inequality in many cases. In this paper we explore this relationship and compute the average value of $p_f$ and $n_f$.

\begin{theorem}
\label{thrm:averagepf}
Let $p_i$ denote the $i^{th}$ prime, and let $p_f$ be the least prime such that $\lambda_f(p_f) < 0$. Assuming GRH we have,

    \begin{equation*}
        \lim_{k + N \to \infty} \frac{1}{|H_k^*(N)|} \sum_{f \in H_k^*(N)} p_f = \sum_{i=1}^{\infty} \frac{p_i}{2^i},
    \end{equation*}
    
as $k+N$ tends to infinity over even $k$ and square-free $N$.
\end{theorem}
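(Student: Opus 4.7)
Write the desired average as
\begin{equation*}
    \frac{1}{|H_k^*(N)|} \sum_{f \in H_k^*(N)} p_f = \sum_{i \geq 1} p_i \cdot P_{k,N}(i), \qquad P_{k,N}(i) := \frac{\#\{f \in H_k^*(N) : p_f = p_i\}}{|H_k^*(N)|}.
\end{equation*}
The event $\{p_f = p_i\}$ is $\{\lambda_f(p_1) > 0, \ldots, \lambda_f(p_{i-1}) > 0, \lambda_f(p_i) < 0\}$, so by inclusion--exclusion it suffices to evaluate, for each subset $S \subseteq \{1, \ldots, i\}$, the proportion of $f$ satisfying $\lambda_f(p_j) > 0$ for all $j \in S$.

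\textbf{Main term via joint Sato--Tate.} Writing $\lambda_f(p) = 2\cos\theta_f(p)$ and using the Hecke relation $\lambda_f(p^m) = U_m(\cos\theta_f(p))$, where $U_m$ is the $m$th Chebyshev polynomial of the second kind, any trigonometric polynomial in $(\theta_f(p_1), \ldots, \theta_f(p_i))$ is a polynomial combination of Hecke eigenvalues $\lambda_f(p_1^{m_1} \cdots p_i^{m_i})$. Averaging over $f \in H_k^*(N)$ and applying the paper's large sieve inequality (uniform in $k$ and $N$) to these eigenvalue sums would yield vertical joint Sato--Tate equidistribution at the primes $p_1, \ldots, p_i$: as $k + N \to \infty$, the tuple $(\theta_f(p_1), \ldots, \theta_f(p_i))$ equidistributes with respect to $\prod_{j=1}^{i} \tfrac{2}{\pi}\sin^2\theta_j\,d\theta_j$. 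To lift this from smooth test functions to the sign indicator $\mathbf{1}_{(0,\pi/2)}$, I would sandwich $\mathbf{1}_{(0,\pi/2)}$ between Fej\'er-type trigonometric polynomials of degree $M = M(Q)$. Since the Sato--Tate measure of $(0,\pi/2)$ equals $1/2$ by symmetry and the product measure factors, the joint probability equals $2^{-|S|}$ for each subset $S$, and hence $P_{k,N}(i) \to 2^{-i}$ for each fixed $i$.

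\textbf{Tail control via GRH.} To interchange the sum over $i$ with the limit I need a uniform tail bound. Under GRH, the bound \eqref{eq:nfGRH}, applied equally to $p_f$ as indicated in the introduction, gives $p_f \leq C(\log Q)^2$ for every $f$, so $P_{k,N}(i) = 0$ whenever $p_i > C(\log Q)^2$. For a truncation parameter $I$ to be chosen,
\begin{equation*}
    \sum_{i > I} p_i \, P_{k,N}(i) \;\leq\; C(\log Q)^2 \cdot \frac{\#\{f \in H_k^*(N) : \lambda_f(p_j) > 0 \text{ for all } j \leq I\}}{|H_k^*(N)|}.
\end{equation*}
Joint equidistribution bounds the right factor by $2^{-I} + \varepsilon(Q; p_1, \ldots, p_I)$, where $\varepsilon$ is the large sieve error. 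Choosing $I = I(Q) \sim 3\log_2 \log Q$ makes $(\log Q)^2 \cdot 2^{-I} \to 0$; the remaining task is to verify that for this $I$ the error $\varepsilon$ also decays faster than $(\log Q)^{-2}$, which is feasible since $p_1 \cdots p_I$ remains only polylogarithmic in $Q$.

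\textbf{Main obstacle.} The essential ingredient is the uniform large sieve inequality in both the weight and level aspects. Standard Petersson and Eichler--Selberg arguments each give uniformity in one aspect, but a single inequality uniform jointly (and valid for squarefree $N$ to avoid newform/oldform complications) is what simultaneously underwrites the joint equidistribution and the uniform tail bound, and in particular must remain strong enough to allow $i$ to grow like $\log \log Q$. A secondary, essentially technical, challenge is the Gibbs-type error in approximating the discontinuous indicator $\mathbf{1}_{(0,\pi/2)}$ by trigonometric polynomials, but this is absorbed by letting $M(Q) \to \infty$ slowly.
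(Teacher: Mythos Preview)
Your overall architecture matches the paper's: decompose into a main term governed by equidistribution at finitely many primes and a tail controlled by a large sieve together with the GRH bound on $p_f$. However, there are three points where your proposal either errs or is incomplete relative to the paper.

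\textbf{Wrong limiting measure.} Vertical equidistribution (fixed primes $p_1,\dots,p_i$, varying $f\in H_k^*(N)$) is with respect to the product of the $p$-adic Plancherel measures $\mu_{p_j}$, not the Sato--Tate measure; Sato--Tate is the horizontal statement. The paper establishes this via Serre's trace formula input (Proposition~\ref{prop:serre}), not the large sieve. Fortunately $\mu_p$ is also symmetric about $\pi/2$, so $\mu_p([0,\pi/2])=1/2$ and your conclusion $P_{k,N}(i)\to 2^{-i}$ survives, but the reasoning should be corrected.

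\textbf{The tail is where the work is.} Your ``remaining task'' is not a technicality: making the equidistribution error decay as $I$ grows with $Q$ is exactly the hard input. The paper does not push equidistribution that far; instead it invokes the Lau--Wu moment large sieve (Proposition~\ref{prop: large sieve}, Corollary~\ref{cor: large sieve}) to get directly
\[
\#\{f\in H_k^*(N): p_f>2\beta\}\ \ll\ k\varphi(N)\exp\bigl(-C\,\beta/\log\beta\bigr),\qquad \beta\ll \log(kN),
\]
applies this in dyadic ranges up to $\log(kN)$, and only then uses the GRH bound $p_f\ll(\log Q)^2$ for the remaining tail. Your growing-$I$ equidistribution approach can be made to work, but proving the needed uniform error bound is essentially equivalent to proving this large sieve estimate.

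\textbf{Primes dividing the level.} You do not address the $p\mid N$ case. For $p\Vert N$ one has $\lambda_f(p)=\pm p^{-1/2}$, and Lemma~\ref{lem:pf half} in the paper checks, via the newform trace formula, that these Atkin--Lehner signs are also equidistributed in $\{\pm1\}$ and independent of the $p\nmid N$ angles. Without this, the statement $P_{k,N}(i)\to 2^{-i}$ is not justified when some $p_j\mid N$.
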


The GRH condition is on each symmetric power $L$-function, $L(s, \mathrm{Sym}^mf)$. That is, for $m \geq 1$ the non-trivial zeros of $L(s, \mathrm{Sym}^mf)$ occur on $\Re(s)=1/2$. We require this as we do not have a strong enough upper bound on $p_f$ -- see Section \ref{sec:motivation}. We also discuss further in Section \ref{sec:motivation} the structure of eigenvalues at squares of primes that divide the level, and hence why we require square-free $N$.

\begin{theorem}
\label{thrm:averagenf}
Let $p_i$ denote the $i^{th}$ prime and let $n_f$ be the least integer such that $\lambda_f(n_f) < 0$. Let $(k_r, N_r)$ denote a pair of positive integers indexed by $r \geq 1$, where for each $r$ we have either,

\begin{itemize}
    \item $k_r$ is even;
    \item $N_r$ only has prime factors strictly greater than $\log(k_r N_r)$;
    \item $N_r$ tends to infinity as $r$ does;
\end{itemize}

or we have,

\begin{itemize}
    \item $N_r = 1$ for all $r \geq 1$;
    \item $k_r$ is even and tends to infinity as $r$ does.
\end{itemize}

Then,
\begin{equation}
\label{eq:nf limit}
    \lim_{r \to \infty} \frac{1}{|H_{k_r}^*(N_r)|} \sum_{f \in H_{k_r}^*(N_r)} n_f = \sum_{i \geq 1} \sum_{n \geq 1} p_i^n \prod_{j = 1}^{\pi(p_i^n)} \mu_{p_j}(I_{p_i^n}(p_j)),
\end{equation}

Here, $\mu_p$ denotes the $p$-adic Plancherel measure,

\begin{equation*}
    \mu_p = \frac{2}{\pi} \left( 1 + \frac{1}{p} \right) \frac{\sin^2 \theta}{(1-p^{-1})^2 + \frac{4}{p}\sin^2 \theta} \textrm{d} \theta,
\end{equation*}

and is applied at intervals $I_{p_i^n}(p_j) \subseteq [0, \pi]$:

\begin{equation*}
    I_{p_i^n}(p_j) = \begin{cases}
        \Big[ 0, \frac{\pi}{a_{p_i^n}(p_j)+1} \Big] &\textrm{ if } p_j \neq p_i, \\
            
            \Big[ \frac{\pi}{n+1}, \frac{\pi}{n} \Big] &\textrm{ if } p_j=p_i,
        \end{cases}
\end{equation*}
     with $a_{p_i^n}(p_j) = \lfloor n\log p_i / \log p_j \rfloor$ being the greatest power such that $p_j^{a_{p_i^n}(p_j)} < p_i^n$.
\end{theorem}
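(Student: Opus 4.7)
The strategy is to reduce to a question about the joint distribution of Satake angles, apply joint Plancherel equidistribution at each fixed prime-power value of $n_f$, and justify the exchange of limit and infinite sum via a uniform tail estimate coming from the large sieve.

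I would first observe that $n_f$ is always a prime power: if $n_f = ab$ with $(a,b)=1$ and $1 < a, b$, then $\lambda_f(n_f) = \lambda_f(a)\lambda_f(b)$, and the minimality of $n_f$ forces both factors to be nonnegative, contradicting $\lambda_f(n_f) < 0$. Hence
\begin{equation*}
    \sum_{f \in H_{k_r}^*(N_r)} n_f = \sum_{i \geq 1}\sum_{n \geq 1} p_i^n \cdot \big|\{f : n_f = p_i^n\}\big|.
\end{equation*}
Introducing the Satake angle $\theta_{f,p} \in [0,\pi]$ via $\lambda_f(p^b) = \sin((b+1)\theta_{f,p})/\sin\theta_{f,p}$, a short trigonometric check shows that $\{n_f = p_i^n\}$ is equivalent to the simultaneous conditions $\theta_{f,p_j} \in I_{p_i^n}(p_j)$ for every $j \leq \pi(p_i^n)$. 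Indeed, at each prime $p_j \neq p_i$, requiring $\lambda_f(p_j^b) \geq 0$ for all $b \leq a_{p_i^n}(p_j)$ collapses to the single interval $[0, \pi/(a_{p_i^n}(p_j)+1)]$, while at $p_i$ the joint constraints $\lambda_f(p_i^b) \geq 0$ for $b < n$ together with $\lambda_f(p_i^n) < 0$ isolate precisely the interval $[\pi/(n+1), \pi/n]$.

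The next step is to invoke joint equidistribution: for any fixed finite set of primes $\{p_1, \ldots, p_M\}$ and any fixed box $\prod_j J_j \subseteq [0, \pi]^M$,
\begin{equation*}
    \frac{1}{|H_{k_r}^*(N_r)|} \big|\{f : \theta_{f,p_j} \in J_j \textrm{ for } j \leq M\}\big| \longrightarrow \prod_{j=1}^M \mu_{p_j}(J_j),
\end{equation*}
as $r \to \infty$. This vertical Sato--Tate statement, uniform simultaneously in the weight and level aspect, is the content of the large sieve inequalities developed earlier in the paper. Applied to the box built from the intervals $I_{p_i^n}(p_j)$, it yields the limit of each individual summand on the right-hand side of \eqref{eq:nf limit}. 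The hypothesis that the prime factors of $N_r$ exceed $\log(k_r N_r)$ (or $N_r = 1$) is precisely what guarantees that for any fixed prime $p$, eventually $p \nmid N_r$, so no Atkin--Lehner type degeneration distorts the limiting measure at $p$.

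To exchange the limit with the infinite sum over $(i,n)$, I would prove a uniform tail bound. The event $n_f \geq m$ forces $\theta_{f,p} \in [0, \pi/2]$ for every prime $p < m$, and $\mu_p([0, \pi/2]) = \tfrac{1}{2} + O(1/p)$. Applying the large sieve to the growing collection of primes $p < m$, the proportion of $f$ with $n_f \geq m$ is bounded by $\prod_{p < m}(\tfrac{1}{2} + O(1/p))$ plus a negligible error, which decays superpolynomially in $m$. Consequently $\sum_{m > X} m \cdot |\{f : n_f = m\}|/|H_{k_r}^*(N_r)|$ is $o_X(1)$ uniformly in $r$, provided $X$ may be chosen to grow slowly with $k_r N_r$, and a standard truncation argument combining this tail bound with the pointwise limits for $m \leq X$ completes the proof. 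The main obstacle is exactly this last step: the large sieve must tolerate $\pi(X)$ simultaneous angle conditions as $X$ itself is allowed to grow, uniformly in both $k$ and $N$, which is the technical reason the authors must develop new large sieve inequalities rather than quote existing ones and why the hypothesis on the small prime factors of $N_r$ appears in the statement.
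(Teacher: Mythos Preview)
Your overall architecture matches the paper's: reduce to prime powers, translate $\{n_f = p_i^n\}$ into a box condition on Satake angles, invoke Serre's joint Plancherel equidistribution for each fixed value, and control the tail. The identification of the intervals $I_{p_i^n}(p_j)$ via the Chebyshev relation is correct and is exactly how the paper argues.

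The gap is in the tail estimate. Your claimed bound --- that the proportion of $f$ with $n_f \geq m$ is at most $\prod_{p<m}(\tfrac12+O(1/p))$ plus a negligible error, \emph{uniformly in $r$} --- cannot hold as stated: the large sieve of this type (the paper's Corollary~3.4) is valid only in the range $m \ll \log(k_rN_r)$, and beyond that the sieve error term dominates. The paper therefore splits the tail into two further ranges. For $B(k,N) < n_f \leq \log(kN)$ the exponential-type sieve you describe does suffice, via dyadic summation. But for $n_f > \log(kN)$ two further inputs are required that you never invoke: Matom\"aki's unconditional bound $n_f \ll (k^2N)^{3/8}$, which caps the possible size of $n_f$, and a second, genuinely different large sieve (the paper's Theorem~1.4) giving a \emph{power saving} $|\{f : n_f > (\log kN)^A\}| \ll N^{1/2+1/2A+\varepsilon}k^{1/A+\varepsilon}$. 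Only the product of these two, with $A>4$, is $o(k\varphi(N))$. If instead one tries to use the exponential sieve at its endpoint $\beta \asymp \log(kN)$ and multiply by the worst-case $n_f \ll (k^2N)^{3/8}$, the result is $(k^2N)^{3/8}\cdot k\varphi(N)\exp(-c\log(kN)/\log\log(kN))$, which is \emph{not} $o(k\varphi(N))$. So the superpolynomial-decay heuristic alone does not close the argument.

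A minor point: the joint equidistribution you cite is Serre's trace-formula result, not a consequence of the large sieve; the large sieve enters only for the tail. Also, $\mu_p([0,\pi/2]) = \tfrac12$ exactly, by the symmetry $\theta \mapsto \pi-\theta$ of the Plancherel density.
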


We do not require GRH in this theorem as we have a strong enough unconditional bound on $n_f$, \eqref{eq:nf}. In this theorem we may let our weight $k$ either be fixed or vary arbitrarily, provided $k$ is even, but we must have that the level $N$ tends to infinity over integers with large enough prime factors. On the other hand, we can also fix $N=1$, provided $k$ tends to infinity over even integers. This is due to the structure of eigenvalues at primes that divide the level, see Section \ref{sec:motivation}.

We remark that it is possible to remove the restriction that $N$ requires large prime factors, via similar methods in this paper, if one took into consideration the structure of eigenvalues at primes that divide the level. However, the average would not be the same as in Theorem \ref{thrm:averagenf}, and further a limiting average may not exist if $N$ tends to infinity arbitrarily.

\begin{remark}
\label{rem:comp}
    Both averages in Theorems \ref{thrm:averagepf} and \ref{thrm:averagenf} are finite, with values that have been computed as,

\begin{equation*}
    \sum_{i=1}^{\infty} \frac{p_i}{2^i} \approx 3.674643966011328 \dots
\end{equation*}

\begin{equation*}
    \sum_{i \geq 1} \sum_{n \geq 1} p_i^n \prod_{j = 1}^{\pi(p_i^n)} \mu_{p_j}(I_{p_i^n}(p_j)) \approx 2.9423403000531483 \dots 
\end{equation*}
\end{remark}

The key point in both statements concerns asking the question: what is the likelihood that $p_f$ or $n_f$ takes a certain value in terms of the weight and level of the form? As we will see in Section \ref{sec:motivation} we are familiar with the distribution of eigenvalues in two ways: at a fixed prime and as the weight and level tend to infinity, or at a fixed weight and level as the prime value tends to infinity. For Theorems \ref{thrm:averagepf} and \ref{thrm:averagenf} we are tasked with combining these two distributions, which is in fact not particularly complicated.

Indeed the main bulk of the proofs lies in showing that $p_f$ or $n_f$ is unlikely to be large. For this we use large sieve techniques. Originally derived by Linnik \cite{Lin2} in the context of the least quadratic non-residue, there have since been analogous statements developed for modular forms, notably by Deshouillers and Iwaniec \cite{DesIwa}.

However many of the large sieve inequalities for cusp forms found in the literature were either not strong enough for our needs or not uniform in both the weight and the level aspect, particularly for Theorem \ref{thrm:averagenf} where we wanted to avoid using the GRH condition. This led to us developing a version of the large sieve inequality found in \cite{DesIwa} that is stronger in some ranges.

\begin{theorem}
\label{thrm:weighted linnik sieve}
    Let $a_m$ be complex numbers for all $m \leq M$, for some $M > 1$. Suppose that $Nk^{\alpha} \geq 2\pi M n$ for $1/2 < \alpha < 1$ with $n = e^{k/(k-3)}$ and $k > 2$ is even. Then we have,

    \begin{equation*}
        \sum_{f \in H_k^*(N)} \omega_f \left| \sum_{\substack{ m \leq M \\ m \nmid N}} a_m \lambda_f(m) \right|^2 \ll \left( 1 + \frac{M}{Nk^{\eta}} \right) \| a\|^2
    \end{equation*}

    where $\eta = k(1-\alpha) - k^{2\alpha - 1}/2$ and $\omega_f$ is the harmonic weight:

    \begin{equation*}
        \omega_f = \frac{\Gamma(k-1)}{(4\pi)^{k-1} \langle f, f \rangle},
    \end{equation*}
    
    with $\langle \cdot, \cdot \rangle$ the Petersson inner product. In the $k=2$ case an extra factor of $\log M$ is applied to the second term, and we have $n=1$. The implied constant is absolute.
\end{theorem}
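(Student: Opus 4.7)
The plan is to open the squared modulus and apply a weighted Petersson trace formula for newforms, then separate the diagonal $\|a\|^2$ from the off-diagonal Kloosterman--Bessel sum, and bound the latter by $M/(Nk^{\eta}) \cdot \|a\|^2$ using sharp small-argument Bessel estimates tailored to the hypothesis $Nk^{\alpha} \geq 2\pi M n$.

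After expanding the square, the left-hand side becomes
\[
\sum_{\substack{m_1, m_2 \leq M \\ m_i \nmid N}} a_{m_1} \overline{a_{m_2}} \sum_{f \in H_k^*(N)} \omega_f \lambda_f(m_1) \lambda_f(m_2).
\]
I would use Möbius inversion to express the inner newform trace in terms of the classical Petersson trace formula on an orthogonal basis $B_k(N')$ for divisors $N' \mid N$; since the weights $\omega_f$ are positive this passage is benign. The diagonal $\delta_{m_1 = m_2}$ assembles across these levels to $\|a\|^2$ up to a bounded constant. The restriction $m \nmid N$ is what keeps this inclusion-exclusion compatible with the Hecke action, avoiding interference from the special eigenvalue behaviour at primes dividing the level.

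The technical core is bounding the off-diagonal sum
\[
\sum_{N \mid c} \frac{S(m_1, m_2; c)}{c} J_{k-1}\!\left(\frac{4\pi \sqrt{m_1 m_2}}{c}\right).
\]
Since $c \geq N$, the Bessel argument $x := 4\pi \sqrt{m_1 m_2}/c$ satisfies $x \leq 4\pi M/N \leq 2k^{\alpha}/n$ by hypothesis, which is small relative to the order $k-1$. I apply the Taylor-series bound
\[
|J_{k-1}(x)| \leq \frac{(x/2)^{k-1}}{\Gamma(k)} \exp\!\left(\frac{x^2}{4k}\right),
\]
obtained by dominating the absolute values of the series terms and using $\prod_{i=0}^{j-1}(k+i) \geq k^j$. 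Combined with Stirling $\Gamma(k)^{-1} \asymp \sqrt{k}\,(e/k)^{k}/k$, the first factor becomes essentially $k^{-(1-\alpha)(k-1)} \cdot (e/n)^{k-1}$. The specific definition $n = e^{k/(k-3)}$ is forced by the need for $(e/n)^{k-1} = e^{-3(k-1)/(k-3)}$ to remain bounded for even $k > 2$, thereby cancelling exactly the $e^k$ factor from Stirling. This yields the main saving $k^{-k(1-\alpha) + O(1)}$, while the exponential correction $\exp(x^2/(4k)) \leq \exp(k^{2\alpha-1}/n^2)$ accounts for the subleading $k^{2\alpha-1}/2$ term in $\eta$.

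Applying the Weil bound $|S(m_1, m_2; c)| \leq \tau(c)(m_1, m_2, c)^{1/2} c^{1/2}$ and summing over $c \equiv 0 \pmod N$ gives a convergent tail contributing $O(N^{-1/2})$. A final Cauchy--Schwarz step on the $a_m$ coefficients produces the factor $M$, yielding the off-diagonal bound $\|a\|^2 \cdot M/(N k^{\eta})$. The main obstacle is pinning down the Stirling--Bessel estimate with enough quantitative precision to recover the exact exponent $\eta$; the $k = 2$ case then needs separate treatment because Stirling degenerates and $J_1$ decays only polynomially at small argument, producing the extra $\log M$ factor via a nearly-logarithmic $c$-sum.
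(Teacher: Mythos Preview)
Your proposal is broadly correct and follows the same Petersson--Bessel--Kloosterman structure as the paper, but it differs from the paper's argument in three technical respects worth noting.

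First, the passage to newforms: the paper does not use M\"obius inversion. It works over a full orthonormal basis $\mathcal{F}$ of $S_k(N)$, establishes the inequality there, and then simply observes that the renormalised newforms $f/\langle f,f\rangle^{1/2}$ sit inside such a basis; positivity of $\omega_f|\cdot|^2$ gives the restriction to $H_k^*(N)$ for free. Your M\"obius route introduces alternating signs, so your remark that ``positive weights make this passage benign'' does not justify it; the paper's route is both simpler and correct.

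Second, the Bessel estimate: the paper does not use your clean bound $|J_{k-1}(x)| \leq (x/2)^{k-1}\Gamma(k)^{-1}\exp(x^2/4k)$. Instead it expands the power series, forms the termwise-squared object $\mathscr{J}_{k-1}(x) = \sum_{l\geq 0} \bigl(x^{k-1+2l}/(l!\,\Gamma(k+l))\bigr)^2$, applies Stirling, and bounds the $l$-sum by splitting at $l = k^{2\alpha-1}$ and maximising $-2x k^{2\alpha-1}\log x$ at $x=e^{-1}$. Your direct approach is shorter and yields the same (in fact marginally sharper) exponent $\eta$.

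Third, the Kloosterman treatment: the paper's main device is not the Weil bound but the classical additive large sieve applied to the exponential sums, giving $\bigl|\sum_{m,n} \bar a_m a_n S(m,n;c)\bigr| \leq (c+M)\|a\|^2$ directly, with no separate Cauchy--Schwarz on the $a_m$. Weil is invoked only for the tail $c>M^2$ in the $k=2$ case. Your Weil-only route with a final Cauchy--Schwarz should still close, but you would need to control the $\tau(c)$ factors in the $c$-sum.
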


In fact, this theorem also holds for $0 < \alpha \leq 1/2$, but is not proven here, see Remark \ref{rem: alpha}. We remove the conditions on the length of the sieved interval to give a complete sieve that has less saving in $k$, although a version of this has already been shown by Lam \cite[Theorem 2.2]{Lam} but with a slightly weaker $N^{-1+\varepsilon}$ in place of $N^{-1}$ (the saving in $k$ is the same), see Theorem \ref{thrm:complete linnik sieve}.

With Theorem \ref{thrm:weighted linnik sieve} we can show that $p_f$ and $n_f$ are extremely unlikely to be large. In particular, we have,

\begin{theorem}
\label{thrm: linnik nf}
    Let $(\omega_p)$ be a sequence of signs indexed by primes $p \leq (\log kN)^A$, where $A>1$. Let $k$ be an even integer, and $N>1$. Then we have,

    \begin{equation*}
        \# \{ f \in H_k^*(N): \lambda_f(p)\omega_p > 0, \  p\leq (\log kN)^A, \  p \nmid N\} \ll_{\varepsilon_1, \varepsilon_2} N^{1/2 + 1/2A + \varepsilon_1}k^{1/A + \varepsilon_2},
    \end{equation*}
    
    for any $\varepsilon_1, \varepsilon_2 > 0$, and provided $k$ and $N$ are large enough. The implied constant depends only on $\varepsilon_1$ and $\varepsilon_2$.
    
\end{theorem}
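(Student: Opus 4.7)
The argument is a Linnik-type large-sieve count.  Let $\mathcal{B}$ denote the set on the left-hand side and, for a parameter $z$ to be chosen, form the Linnik product
\[
D_f := \prod_{\substack{p \leq z \\ p \nmid N}} \Bigl(1 + \tfrac{1}{2} \omega_p\lambda_f(p)\Bigr) = \sum_{d} a_d \lambda_f(d),
\]
with $a_d = \mu(d)^2 \omega_d / 2^{\Omega(d)}$ supported on squarefree $d$ whose prime factors lie in $\{p \leq z : p \nmid N\}$, where $\omega_d = \prod_{p \mid d}\omega_p$.  Thus $(a_d)$ is supported on $d \leq M = \prod_{p \leq z,\, p \nmid N} p = e^{z(1+o(1))}$, with squared $\ell^2$-norm $(5/4)^{\pi(z)}$.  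The key feature is that each factor of $D_f$ strictly exceeds $1$ when $\omega_p\lambda_f(p) > 0$, so $|D_f|^2 \geq 1$ for every $f \in \mathcal{B}$.

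Using the Hoffstein--Lockhart lower bound $\omega_f \gg (kN)^{-1-\varepsilon}$ on the harmonic weight, the count reduces to a weighted second moment
\[
|\mathcal{B}| \;\ll\; (kN)^{1+\varepsilon} \sum_{f \in H_k^*(N)} \omega_f\, |D_f|^2,
\]
and Theorem~\ref{thrm:weighted linnik sieve}, applied to $(a_d)$ with an $\alpha \in (1/2, 1)$ chosen so that $Nk^\alpha \geq 2\pi n M$, gives
\[
\sum_{f \in H_k^*(N)} \omega_f\, |D_f|^2 \;\ll\; \Bigl(1 + \tfrac{M}{Nk^\eta}\Bigr) \bigl(\tfrac{5}{4}\bigr)^{\pi(z)}, \qquad \eta = k(1-\alpha) - \tfrac{1}{2} k^{2\alpha-1}.
\]

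The proof then concludes with a joint optimisation of $\alpha$ and $z$.  The constraint $M \leq Nk^\alpha/(2\pi n)$ restricts $z$ to be at most of order $\log N + \alpha\log k$, while $\alpha$ closer to $1/2$ maximises the $k$-saving $k^\eta$; balancing the residual factor $M/(Nk^\eta)$ against the Hoffstein--Lockhart loss $(kN)^{1+\varepsilon}$, and absorbing the combinatorial factor $(5/4)^{\pi(z)}$ into $(kN)^{\varepsilon_1 + \varepsilon_2}$, yields the claimed shape $k^{1/A + \varepsilon_2}N^{1/2 + 1/(2A) + \varepsilon_1}$.  The main obstacle is this optimisation: the non-linear interaction between the sieve length $M$, the saving exponent $\eta$, and the sieve parameter $\alpha$ means the exponents $1/A$ and $1/2 + 1/(2A)$ emerge only after a careful calibration; in particular, the choice of $z$ must be large enough that $D_f$ encodes enough of the sign constraints defining $\mathcal{B}$, while still permitting the $k$-saving from Theorem~\ref{thrm:weighted linnik sieve} to be used at its strongest.
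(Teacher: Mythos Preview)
Your approach has a genuine gap: the parameter $A$ never actually enters your bound, and the estimate you obtain is trivial.

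With the full Linnik product $D_f=\prod_{p\leq z,\,p\nmid N}(1+\tfrac12\omega_p\lambda_f(p))$ there is no truncation, so the coefficient sequence $(a_d)$ is supported on all squarefree $z$-smooth integers, i.e.\ on $d\leq M=\prod_{p\leq z}p=e^{z(1+o(1))}$. The hypothesis $Nk^{\alpha}\geq 2\pi Mn$ of Theorem~\ref{thrm:weighted linnik sieve} then forces $z\ll \log(kN)$; in particular you can never take $z$ near $(\log kN)^A$ for $A>1$, so the extra sign information in the range $(\log kN,(\log kN)^A]$ is simply discarded. Moreover, the only amplification you extract is $|D_f|^2\geq 1$ for $f\in\mathcal B$, while $\|a\|^2=(5/4)^{\pi(z)}$. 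Feeding this into your displayed inequality gives, for $z\asymp\log(kN)$,
\[
|\mathcal B|\ \ll\ (kN)^{1+\varepsilon}\Bigl(1+\tfrac{M}{Nk^{\eta}}\Bigr)(5/4)^{\pi(z)}\ \ll\ (kN)^{1+2\varepsilon},
\]
which is worse than the trivial bound $|\mathcal B|\leq |H_k^*(N)|$. No choice of $\alpha$ or $z$ rescues this, and nothing in the optimisation produces the exponents $1/A$ and $1/2+1/(2A)$.

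What the paper does differently is to use a \emph{truncated} amplifier (Corollary~\ref{cor:weighted polynomial sieve}). One fixes degree-$2$ Chebyshev minorants $Y_p$ of $\operatorname{sgn}(\omega_p\lambda_f(p))$ and forms
\[
\sum_{\substack{m\leq M\\ m\mid P_N((\log kN)^A)}}\ \prod_{p\mid m}\bigl(-\!\sum_{i=1}^{2}\alpha_p(i)\lambda_f(p^i)\bigr),
\]
which is a sum over $(\log kN)^A$-smooth squarefree $m$ up to a \emph{freely chosen} $M$. After squaring and applying Theorem~\ref{thrm:weighted linnik sieve}, the gain is governed by
\[
H=\sum_{\substack{m\leq M\\ m\mid P_N((\log kN)^A)}}5^{-\omega(m)}\ \gg\ M^{1-1/A-\varepsilon},
\]
the smooth-number count being precisely where $A$ enters. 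Taking $M=N^{1/2}k^{\gamma}$ with $\gamma\to 1$ and removing the harmonic weight then yields $N^{1/2+1/(2A)+\varepsilon_1}k^{1/A+\varepsilon_2}$. The essential missing idea in your proposal is this decoupling of the sieve length $M$ from the product of the sifting primes, so that one can use primes up to $(\log kN)^A$ while keeping the support of the amplifier at size $\asymp N^{1/2}k$.
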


This theorem may be adapted to sieve sets that count the number of forms that have $\lambda_f(p) \in I_p$, for $p \leq (\log kN)^A$, where $I_p$ is an interval of the form $[ a, b ] \subseteq [-2, 2]$, see Section \ref{sec:proof of large sieve}.

Taking $\omega_p = +1$ for all $p \leq (\log kN)^A$, then the set in Theorem \ref{thrm: linnik nf} gives us a bound on the number of forms with $p_f > (\log kN)^A$, and also contains the set of forms with $n_f > (\log kN)^A$. We may compare this to results found in \cite[Theorem 2]{klsw} which sieve the set of newforms with $p_f$ or $n_f$ slightly smaller (specifically $> \log kN$), and which we also adapt for our purposes, see Proposition \ref{prop: large sieve}, Corollary \ref{cor: large sieve} and Corollary \ref{cor: nf large sieve}.

We structure this paper in the following way. In Section \ref{sec:motivation} we describe further background on the problem that give motivation for Theorems \ref{thrm:averagepf} and \ref{thrm:averagenf}. In Section \ref{sec:proof thrm pf} we prove these theorems in detail, and in Section \ref{sec:aux proof} we prove auxiliary statements required in the proofs of Theorems \ref{thrm:averagepf} and \ref{thrm:averagenf}. Finally in Section \ref{sec:proof of large sieve} we prove Theorems \ref{thrm:weighted linnik sieve} and \ref{thrm: linnik nf}, along with an auxiliary corollary. In the Appendix we complete the sieve statement in Theorem \ref{thrm:weighted linnik sieve} so that we no longer restrict the length of the sieved sum.

\subsection*{Acknowledgments}
I would like to thank Jonathan Bober, Andrew Booker and Min Lee for their guidance and suggestions.

\section{Motivation}
\label{sec:motivation}
We begin with motivating Theorem \ref{thrm:averagepf}. As mentioned previously, we may consider this problem to be an analogue to the least quadratic non-residue problem. Understanding the size of the least quadratic non-residue is a classical problem in analytic number theory, with origins from Gauss in the late $18^{\text{th}}$ century \cite{Gau}. A detailed exposition on the problem can be found here \cite{Tao}. An alternative, easier problem is to consider the average least quadratic non-residue.

In this direction we have the following result of Erd\H os \cite{Erd}. Let $n_2(p)$ denote the first quadratic non-residue modulo $p$. Then,

\begin{equation*}
    \lim_{x \to \infty} \frac{1}{\pi(x)} \sum_{p \leq x} n_2(p) = \sum_{i=1}^{\infty} \frac{p_i}{2^i},
\end{equation*}

where $\pi(x)$ denotes the prime counting function, and $p_i$ denotes the $i^{th}$ prime. This can be seen heuristically from quadratic reciprocity, from which we have that a given prime number is a residue (or non-residue) modulo $p$ exactly half the time. Thus one can say that the `probability' that $n_2(p) = p_i$ is $2^{-i}$, hence giving the statement above. In proving the statement, quadratic reciprocity is not sufficient to show that this is the average. To complete his statement, Erd\H os used the (new at the time) large sieve methods of Linnik \cite{Lin} and R\'enyi \cite{Ren} to show that $n_2(p)$ cannot be too large.

This result has been generalised in many ways by Burgess and Elliot \cite{BurEll}, Elliot and Murata \cite{EllMur}, with the most recent generalisations by Pollack \cite{Pol}, and Martin and Pollack \cite{MarPol}. These extensions have provided inspiration to extend this problem in other directions, with our main focus on Hecke eigenvalues.

Although Hecke eigenvalues may take values other than $\pm 1$, we may view this problem in terms of their signs instead. For this reason we require the Hecke eigenvalues to be real, and thus there is not such a great analogy for newforms with non-trivial complex character. To explore this further, we state some facts on newforms.

We have Hecke multiplicity,

\begin{equation}
    \label{eq:Heckemult}
    \lambda_f(m) \lambda_f(n) = \sum_{\substack{d | (m,n) \\ (d,N) = 1}} \lambda_f \left( \frac{mn}{d^2} \right),
\end{equation}

for all integers $m, n \geq 1$. We also have Deligne's bound, or the Ramanujan-Petersson conjecture that was proved by Deligne,

\begin{equation*}
    |\lambda_f(n)| \leq \tau(n),
\end{equation*}

where $\tau(n)$ denotes the divisor function. In particular when $n$ is a prime, we have that $|\lambda_f(p)| \leq 2$ and hence we may associate an angle $\theta_f(p) \in [0, \pi]$ such that,

\begin{equation}
\label{eq: thetafp}
    \lambda_f(p) = 2 \cos \theta_f(p).
\end{equation}

The Sato--Tate conjecture provides further information about the distribution of 
$\lambda_f(p)$. Initially conjectured for cusp forms associated with non-CM elliptic curves, it was extended by Serre to all Hecke eigenforms for the full modular group. This extension was proven by Barnet-Lamb, Geraghty, Harris and Taylor \cite{BarGerHarTay} and states for $f \in S_k^{\text{new}}(N)$ (the set of cusp forms of weight $k$ and that are new at level $N$),

\begin{equation*}
    \lim_{x \to \infty} \frac{1}{\pi(x)} | \{ p \leq x : \theta_f(p) \in [\alpha, \beta], p\nmid N \} |  = \mu_{ST}([\alpha, \beta]) = \int_{\alpha}^{\beta} \mathrm{d} \mu_{ST},
\end{equation*}

for some $[\alpha, \beta] \subseteq [0, \pi]$, and where $\mu_{ST}$ is the Sato-Tate measure,

\begin{equation*}
    \mu_{ST} = \frac{2}{\pi} \sin^2 \theta \mathrm{d}\theta.
\end{equation*}

We note here that $\mu_{ST}([0, \pi/2]) = \mu_{ST}([\pi/2, \pi]) = 1/2$ so we have a similar property as that of the Legendre symbol, in that the signs of Hecke eigenvalues should asymptotically be positive and negative an equal amount.

The Sato--Tate distribution only describes eigenvalues at primes that do not divide the level. For the primes that do divide the level, we have (see e.g. \cite[Chapter 9.7]{Kna} or \cite[Chapter 13]{CohStr}),

\begin{equation}
\label{eq:ALL theory}
    \lambda_f(p) = \begin{cases}
        0 &\textrm{ if } p^2 |N \\
        \pm p^{-1/2} &\textrm{ if } p|N \textrm{ but } p^2 \nmid N.
    \end{cases}
\end{equation}

We will see in the proof of Lemma \ref{lem:pf half} that at primes that exactly divide the level, the eigenvalues are distributed uniformly in $\{ -p^{-1/2}, p^{-1/2} \}$, thus the signs of Hecke eigenvalues at all primes have same property as that of the Legendre symbol, provided $N$ is square-free.

Furthermore, these eigenvalues are totally multiplicative,

\begin{equation*}
    \lambda_f(p^2) = \lambda_f(p)^2 \hspace{1em} \text{ for } p|N.
\end{equation*}

So for ease, when considering the average $n_f$ we do not include these eigenvalues.

However, to compute the average $p_f$ it is still necessary to have some upper bound on $p_f$, which is a problem in itself. Indeed, the best known unconditional bound on $p_f$ is from Thorner \cite[Theorem 1.1]{Tho};

\begin{equation}
\label{eq:pf}
    p_f \ll \frac{(kN)^{c}}{\log(kN)},
\end{equation}

for some constant $c$. This constant, and the implied constant, are both shown to be explicitly computable and are likely to be quite large; where `large' means the overall power of $kN$ is greater than 1. Given that we have $|H_k^*(N)| = o(k\varphi(N))$, this bound would contribute too large a value in the average.

The bound \eqref{eq:pf} is achieved through the work of Newton and Thorne \cite{NewThoI, NewThoII} who proved that the $m^{th}$ symmetric power lift, $\mathrm{Sym}^mf$, corresponds with a cuspidal automorphic representation of $GL_{m+1}(\mathbb{A})$ for all $m \geq 1$, where $\mathbb{A}$ is the ring of adeles over $\mathbb{Q}$, and hence implies the analytic continuation and existence of a functional equation for $L(s,\text{Sym}^m f)$. Then with the zero-free regions of $L(s,\textrm{Sym}^mf)$ we have \eqref{eq:pf}, but Thorner \cite[Theorem 1.3]{Tho} also proves that if one assumes GRH for each $L(s,\textrm{Sym}^mf)$, we have the same bound for $p_f$ as in \eqref{eq:nfGRH}, hence the GRH condition in Theorem \ref{thrm:averagepf}.

We may not apply the same logic when considering $n_f$ due to the multiplicativity of $\lambda_f(n)$. While it is true that we still have a similar distribution of signs of $\lambda_f(n)$ for all $n$, in that Matom\"aki and Radziwi\l{}\l{} \cite[Theorem 1.1]{MatRad} show us that half the $\lambda_f(n)$ are positive and half are negative asymptotically, at the local scale we may not make the same statements.

We have that $n_f$ must be a prime power. We can see that $n_f=p^k$ implies a condition on $\theta_f(q)$ for all primes $q \leq p^k$, and on all powers of $q$ such that $q^a \leq p^k$. Therefore we must be more specific in exactly how the $\theta_f(p)$ are distributed for each $p$, particularly for small $p$, as we take the measure of these intervals. To this end, we utilise the following distribution proved by Serre \cite{Ser}, with a version also shown by Conrey, Duke and Farmer for level 1 cuspforms \cite{ConDukFar}. For a fixed prime $p$,

\begin{equation*}
    \lim_{k+N \to \infty} \frac{1}{|H_k^*(N)|} |\{ f \in H_k^*(N) : \theta_f(p) \in [\alpha, \beta] , p \nmid N \}| = \int_{\alpha}^{\beta}  \mathrm{d}\mu_p,
\end{equation*}

for some $[\alpha, \beta] \subseteq [0, \pi]$, and $\mu_p$ denotes the $p$-adic Plancherel measure,

\begin{equation}
    \label{eq: p-adic plancherel}
    \mu_p = \frac{2}{\pi} \left(1+ \frac{1}{p}\right) \frac{\sin^2\theta}{(1-p^{-1})^2+\frac{4}{p}\sin^2\theta} \mathrm{d}\theta.
\end{equation}

We can see that as $p$ tends to infinity, $\mu_p$ tends to $\mu_{ST}$, but for small $p$ we certainly get quite a different distribution, see Figure \ref{fig: mu_p}. Then the shape of the intervals $I_{j,i,n}$ used in Theorem \ref{thrm:averagenf} correspond to intervals that $\theta_f(p_j)$ are in such that $n_f = p_i^n$.

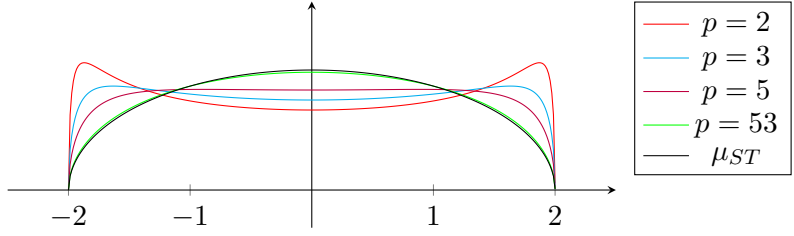
\begin{figure}[H]
\centering
    \begin{tikzpicture}
        \begin{axis}[xtick distance=1, ytick distance=1, xmin=-2.5, xmax=2.5, ymin=-0.1, ymax=0.5, width=8cm, height=3cm, scale only axis, legend pos = outer north east, axis lines = center]
            \addplot[domain=-2:2,samples=300,color=red]{(3/pi)*(sqrt(1-(x^2)/4))/((sqrt(2)+1/(sqrt(2)))^2-x^2)};
            \addlegendentry{$p=2$}
            \addplot[domain=-2:2,samples=300,color=cyan]{(4/pi)*(sqrt(1-(x^2)/4))/((sqrt(3)+1/(sqrt(3)))^2-x^2)};
            \addlegendentry{$p=3$}
            \addplot[domain=-2:2,samples=300,color=purple]{(6/pi)*(sqrt(1-(x^2)/4))/((sqrt(5)+1/(sqrt(5)))^2-x^2)};
            \addlegendentry{$p=5$}
            \addplot[domain=-2:2,samples=300,color=green]{(54/pi)*(sqrt(1-(x^2)/4))/((sqrt(53)+1/(sqrt(53)))^2-x^2)};
            \addlegendentry{$p=53$}
            \addplot[domain=-2:2,samples=300,color=black]{(1/(2*pi))*sqrt(4-x*x)};
            \addlegendentry{$\mu_{ST}$}
        \end{axis}
    \end{tikzpicture}
\caption{Distribution of $\lambda_f(p)$ as $k,N \to \infty$ (coloured lines), in comparison to the distribution of $\lambda_f(p)$ as $p \to \infty$ (black line).}
\label{fig: mu_p}
\end{figure}

\section{Proof of main results}
\label{sec:proof thrm pf}

\subsection{Proof of Theorem \ref{thrm:averagepf}}
\label{Sec:pf proof}

We first consider this problem with a finite set of primes. We adapt a theorem of Serre \cite{Ser} to find the proportion of newforms with eigenvalues that have a certain sign sequence. We note this lemma includes eigenvalues at primes that divide the level. We prove this lemma later, in Section \ref{sec:aux proof}.

\begin{lemma}
\label{lem:pf half}
    Let $\mathcal{P}$ be a finite set of primes, and $(\omega_p)_{p \in \mathcal{P}}$ be a sequence of signs indexed by primes in $\mathcal{P}$. We have,

    \begin{equation*}
     \frac{1}{|H_k^*(N)|} |\{ f \in H_k^*(N): \omega_p\lambda_f(p)>0 \text{ for } p \in \mathcal{P} \} | = \frac{1}{2^{|\mathcal{P}|}}(1+o(1)),
    \end{equation*}  

     as $k + N$ tends to infinity over even $k$ and square-free $N$.
\end{lemma}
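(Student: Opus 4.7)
The plan is to decompose $\mathcal{P}$ by divisibility with $N$ and combine two equidistribution inputs. Write $\mathcal{P} = \mathcal{P}_1 \sqcup \mathcal{P}_2$, with $\mathcal{P}_1 = \{p \in \mathcal{P}: p \nmid N\}$ and $\mathcal{P}_2 = \{p \in \mathcal{P}: p \mid N\}$. Since $N$ is square-free, each $p \in \mathcal{P}_2$ satisfies $p \| N$, so by \eqref{eq:ALL theory} we have $\lambda_f(p) = \pm p^{-1/2}$, with the sign being (up to a fixed twist) the Atkin--Lehner eigenvalue $\epsilon_p(f) \in \{\pm 1\}$. The two parts therefore require different tools, but the final count should factor as a product.

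For $p \in \mathcal{P}_1$, writing $\lambda_f(p) = 2\cos\theta_f(p)$ as in \eqref{eq: thetafp}, the sign of $\lambda_f(p)$ is determined by whether $\theta_f(p)$ lies in $[0, \pi/2)$ or $(\pi/2, \pi]$. The crucial structural observation is that both $\sin^2\theta$ and $(1-p^{-1})^2 + (4/p)\sin^2\theta$ in \eqref{eq: p-adic plancherel} are invariant under $\theta \mapsto \pi - \theta$, so $\mu_p([0,\pi/2]) = \mu_p([\pi/2,\pi]) = 1/2$. A joint version of Serre's equidistribution theorem over $\mathcal{P}_1$, which follows from the single-prime statement by Hecke multiplicativity (a product of Chebyshev polynomials at distinct primes coincides with a single $\lambda_f(\prod_p p^{a_p})$ via \eqref{eq:Heckemult}, reducing joint moments to Eichler--Selberg traces of one Hecke operator), then yields asymptotic independence with product measure $\prod_p \mu_p$, and hence asymptotic proportion $2^{-|\mathcal{P}_1|}$ for the prescribed signs over $\mathcal{P}_1$.

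For $p \in \mathcal{P}_2$, I would use Atkin--Lehner theory. The involutions $W_p$ for $p \mid N$ commute pairwise and with each $T_q$ for $q \nmid N$, and they act on the new subspace with $\pm 1$ eigenvalues. Projecting onto joint eigenspaces, the count of newforms with prescribed signs over $\mathcal{P}_2$ expands as $2^{-|\mathcal{P}_2|} \sum_{T \subseteq \mathcal{P}_2} \bigl(\prod_{p \in T} \omega_p\bigr) \operatorname{tr}\bigl(\prod_{p \in T} W_p \,\big|\, S_k^{\mathrm{new}}(N)\bigr)$, and one must show that each trace with $T \neq \emptyset$ is $o(|H_k^*(N)|)$ as $k + N \to \infty$, which follows from the Eichler--Selberg trace formula for Atkin--Lehner operators together with a M\"obius inversion to project onto the new subspace. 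Because $W_p$ and $T_q$ commute for $q \nmid N$, the joint average over $\mathcal{P}_1$ and $\mathcal{P}_2$ factors, giving the claimed $2^{-|\mathcal{P}|}$.

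The hard part will be the uniform control of the trace formula error terms as $k + N \to \infty$, since the weight may be fixed while the level grows or vice versa. The elliptic, hyperbolic, and unipotent contributions in Eichler--Selberg for traces of the form $\operatorname{tr}(\prod_{p \in T} W_p \cdot T_n \mid S_k^{\mathrm{new}}(N))$ must be shown to be negligible relative to $|H_k^*(N)| \asymp k\varphi(N)/12$ in both aspects simultaneously; this uniformity is precisely where the standard single-aspect statements of Serre's theorem require careful adaptation to match the joint regime $k + N \to \infty$ used throughout the paper.
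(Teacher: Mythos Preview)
Your outline is correct and close in spirit to the paper's argument, but the handling of the primes dividing $N$ is organized differently. You propose to treat $\mathcal{P}_2$ via the Atkin--Lehner involutions $W_p$, expanding the indicator of a prescribed sign as $\tfrac{1}{2}(1 \mp \epsilon_p(f))$ and then invoking a trace formula for operators of the shape $\prod_{p\in T} W_p \cdot T_n$ on the new subspace (Yamauchi/Skoruppa--Zagier type). The paper instead keeps the Hecke operators $T_p^*$ at the bad primes and reduces everything to a single statement about $\mathrm{Tr}(T_n^*)$ for $n=\prod_{p\in\mathcal{P}} p^{m_p}$: by total multiplicativity at $p\mid N$ one factors out the square part of $Q=\prod_{p\in\mathcal{P}_2}p^{m_p}$, and for square-free $Q$ the newform trace formula (with trivial character and square-free level) collapses to exactly the same sum over $M\mid N$ as in the coprime case, so Serre's estimate (Proposition~\ref{prop:serre}) applies verbatim. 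The payoff of the paper's route is that no separate Atkin--Lehner trace formula is needed and no ``factoring'' step has to be justified; your route is equally valid but requires the additional input that $\mathrm{tr}\bigl(\prod_{p\in T}W_p\cdot T_n\,\big|\,S_k^{\mathrm{new}}(N)\bigr)=o(k\varphi(N))$ uniformly for all nonempty $T$ and all relevant $n$, which you correctly flag as the substantive point. One small caution: commutativity of $W_p$ with $T_q$ alone does not give factoring of the joint average---what is really needed (and what you say in your final paragraph) is the vanishing of all the mixed traces with $T\neq\emptyset$, so the sentence ``the joint average \ldots\ factors'' should be read as shorthand for that estimate rather than a formal consequence of commutativity.
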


Label the set in Lemma \ref{lem:pf half} as $G_{k,N}((\omega), \mathcal{P})$, where $(\omega) = (\omega_p)_{p \in \mathcal{P}}$ is a sequence of signs indexed by primes in $\mathcal{P}$, that is

\begin{equation*}
    G_{k,N}((\omega), \mathcal{P}) := |\{ f \in H_k^*(N): \omega_p\lambda_f(p)>0, p \in \mathcal{P}\} |.
\end{equation*}

Let $\mathcal{P}(z)$ be the set of all primes less than or equal to $z$, and set $(\omega) = (+1, +1, \dots, -1)$. Then we may rewrite our sum of $p_f$ over newforms into a sum in terms of $G$ over primes, given by

\begin{equation*}
    \sum_{f \in H_k^*(N)} p_f = \sum_{p \text{ prime}} p \cdot G_{k,N}((\omega), \mathcal{P}(p)). 
\end{equation*}

Let $|\mathcal{P}(p)|$ tend to infinity as $k$ and $N$ do, say at some rate $A(k,N)$, so that we have,

\begin{equation}
\label{eq:splitsum_pf}
    \sum_{f \in H_k^*(N)} p_f = \sum_{p \leq A(k,N)} p \cdot G_{k,N}((\omega), \mathcal{P}(p)) + \sum_{\substack{f \in H_k^*(N) \\ p_f > A(k,N)}} p_f.
\end{equation}

By Lemma \ref{lem:pf half}, if $A(k,N)$ tends to infinity appropriately slowly then the first sum is, 

\begin{equation}
\label{eq:A(k,N)}
    \lim_{k+ N \to \infty} \sum_{p \leq A(k,N)} p \cdot G_{k,N}((\omega), \mathcal{P}(p))  = \sum_{i=1}^{\infty}(1 + o(1)) \frac{p_i}{2^i}|H_k^*(N)|.
\end{equation}

Hence it remains to show that the second sum over $p_f > A(k,N)$ in \eqref{eq:splitsum_pf} is equal to $o(|H_k^*(N)|)$, or rather since we have that (Proposition \ref{prop:serre}),

\begin{equation}
\label{eq: size Hk*N}
    |H_k^*(N)| \sim \frac{k-1}{12}\varphi(N)
\end{equation}

as $k+ N \to \infty$, it suffices to show the second sum is equal to $o(k\varphi(N))$. We use large sieve techniques, which we will prove in Section \ref{sec:aux proof}.

\begin{corollary}
\label{cor: large sieve}
    For $k\geq 2$ an even integer, $N$ square-free there are absolute positive constants $C_0, C_2$ such that,
    
    \begin{equation*}
       |\{ f \in H_k^*(N) : p_f > 2\beta \}| \ll k\varphi(N) \exp \left(-C_2 \frac{\beta}{\log \beta} \right),
    \end{equation*}

    provided,

    \begin{equation*}
    C_0 \ll \beta \ll \log(kN).
\end{equation*}
    
   All implied constants are absolute.
\end{corollary}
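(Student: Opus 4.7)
The plan is to observe that $\{f \in H_k^*(N) : p_f > 2\beta\}$ is contained in the set of $f$ such that $\lambda_f(p) > 0$ for every prime $p \leq 2\beta$ with $p \nmid N$, and then to specialise the preceding Proposition \ref{prop: large sieve} to the sign pattern $\omega_p = +1$ and sieve length $Y = 2\beta$. The role of this corollary is essentially to repackage that proposition in a form convenient for computing the average of $p_f$.

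First I would carry out the reduction explicitly. The condition $p_f > 2\beta$ forces $\lambda_f(p) > 0$ at every prime $p \leq 2\beta$. For the primes $p \leq 2\beta$ with $p \nmid N$, this is precisely the input for Proposition \ref{prop: large sieve} with $\omega_p = +1$; for the primes $p \leq 2\beta$ that do divide $N$, equation \eqref{eq:ALL theory} tells us $\lambda_f(p) = \pm p^{-1/2}$, so the additional sign constraints at these primes only shrink the set further. Thus it suffices to bound the number of $f \in H_k^*(N)$ with $\lambda_f(p) > 0$ for all primes $p \leq 2\beta$ coprime to $N$.

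Then I would invoke Proposition \ref{prop: large sieve} in this regime. The output is a bound of the form $|H_k^*(N)|\exp(-C_2\beta/\log\beta)$, and using $|H_k^*(N)| \asymp k\varphi(N)$ from \eqref{eq: size Hk*N} produces the stated estimate. The hypothesis $C_0 \ll \beta \ll \log(kN)$ corresponds precisely to the window in which Proposition \ref{prop: large sieve} is effective: $\beta$ large enough that $\exp(-C_2\beta/\log\beta)$ is a nontrivial saving, and $\beta$ at most logarithmic in $kN$ so that the sieve length $2\beta$ lies inside the range of validity of the underlying weighted large sieve (Theorem \ref{thrm:weighted linnik sieve}).

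The genuine obstacle is buried in Proposition \ref{prop: large sieve} itself rather than in this corollary. That proposition is a Tur\'an--Linnik style argument: via Hecke multiplicativity $\lambda_f(p)^2 = \lambda_f(p^2)+1$ one builds a positive-coefficient quadratic form in the $\lambda_f(p)$ for $p \leq 2\beta$, shows that this form is forced to be $\gg \beta/\log\beta$ for any $f$ with the prescribed sign pattern, and then invokes the large sieve to bound the number of such $f$. Since that heavy lifting happens upstream, what remains for the corollary is the specialisation $\omega_p \equiv +1$, the translation $Y = 2\beta$, and the book-keeping of the (sparse) primes dividing $N$ that contribute only harmless factors $\leq 1$.
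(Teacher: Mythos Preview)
Your approach is correct and matches the paper's: Corollary~\ref{cor: large sieve} is obtained simply by specialising Proposition~\ref{prop: large sieve} with $\mathscr{P}$ the set of all primes, $\nu=1$, and $\varepsilon_p=+1$, after noting that $p_f>2\beta$ forces $\lambda_f(p)\geq 0$ on the interval $(\beta,2\beta]$. Two small corrections: the paper's Proposition~\ref{prop: large sieve} already includes primes $p\mid N$ (so your separate handling of them is unnecessary), and the large sieve underlying that proposition is Lemma~\ref{lem: large sieve modular forms} of Lau--Wu, not Theorem~\ref{thrm:weighted linnik sieve}; the constraint $\beta\ll\log(kN)$ arises from balancing the two terms in that lemma after choosing $j\asymp\beta/\log\beta$, rather than from the range of validity of Theorem~\ref{thrm:weighted linnik sieve}.
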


Assuming $A(k,N) \ll \log kN$, we let $\beta_j = 2^{j-1} A(k,N)$ (and taking $k, N$ sufficiently large) for $j = 0, \dots, m$, so that $\beta_m \gg \log kN$. We split up our sum into dyadic intervals and apply Corollary \ref{cor: large sieve}.

\begin{equation*}
     \sum_{j=0}^{m} \sum_{\substack{ f \in H_k^*(N) \\ 2\beta < p_f \leq 4\beta }} p_f \ll \sum_{j=0}^{m} 2^j A(k,N) \cdot k\varphi(N) \exp \left( -C_2 \frac{2^{j-1}A(k,N)}{\log (2^{j-1}A(k,N))} \right). 
\end{equation*}

We can see this sum is sufficiently small with the following lazy bound for large enough $k, N$,

\begin{equation}
\label{eq: pf 2nd sum}
    \sum_{j=0}^{\infty} 2^j A(k,N) \cdot k\varphi(N) \exp \left( -C_2 \frac{2^{j-1}A(k,N)}{\log (2^{j-1}A(k,N))} \right) \leq k \varphi(N) \sum_{j=0}^{\infty} \frac{1}{2^{j-1} A(k,N)}.
\end{equation}

And we can see this is $o(k\varphi(N))$ as $k+N$ tends to infinity. Then for the case $p_f \gg \log kN$  we may extend the use of Corollary \ref{cor: large sieve} even further. Here we use the GRH bound $p_f \ll (\log k^2N)^2$, where $Q:= k^2N$, to get,

\begin{equation}
\label{eq: pf 3rd sum}
    \smashoperator{\sum_{\substack{ f \in H_k^*(N) \\ \log kN < p_f }}} p_f \ll (\log Q)^2  k\varphi(N) \exp \left(-C_2 \frac{\log kN}{\log \log kN} \right).
\end{equation}

Similarly we see this sum is also $o(k\varphi(N))$ as $k+N$ tends to infinity. So we may use \eqref{eq: pf 2nd sum} and \eqref{eq: pf 3rd sum} to bound the second sum in \eqref{eq:splitsum_pf} as $o(k\varphi(N))$, proving Theorem \ref{thrm:averagepf}.

\subsection{Proof of Theorem \ref{thrm:averagenf}}
\label{Sec:nf proof}
We use that for a fixed prime $p$, $\lambda_f(p)$ is equidistributed in $[-2,2]$ with respect to the $p$-adic Plancherel measure, $\mu_p$ \eqref{eq: p-adic plancherel}. By this we mean the probability that the eigenvalue at $p$ for any form $f \in H_{k_r}^*(N_r)$ lies in a subset $A \subseteq [-2,2]$ tends to $\mu_p(A)$ as $r$ tends to infinity, provided $p \nmid N$.

For this section we explore the Hecke eigenvalues in terms of their associated angles that arise from Deligne's bound, \eqref{eq: thetafp}. We have the following theorem from Serre.

\begin{theorem}[{\cite{Ser}}]

Let $\mathcal{P}$ be a finite set of primes and let $\theta_f = (\theta_f(p))_{p \in \mathcal{P}}$ be an array of angles associated to eigenvalues at all $p \in \mathcal{P}$ for some $f \in H_k^*(N)$. Then the sequence $(\theta_f)_{f \in H_k^*(N)}$ is equidistributed in $[0, \pi] \times ... \times [0, \pi] $ with respect to $\otimes_{p \in \mathcal{P}} \mu_p$, where $\mu_p$ is the $p$-adic Plancherel measure. That is, for any sequence of intervals $(I_p)_{p \in \mathcal{P}}$ indexed by primes, with $I_p \subseteq [0, \pi]$, we have,

   \begin{equation*}
        \lim_{k+N \to \infty} \frac{1}{|H_k^*(N)|} |\{ f \in H_k^*(N): \theta_f(p) \in I_p, \forall \  p \in \mathcal{P}\}| = \prod_{p \in \mathcal{P}} \mu_p(I_p).
    \end{equation*}

     For $k+N$ tending to infinity over even $k$ and over $N$ such that $p \nmid N$ for all $p \in \mathcal{P}$
\end{theorem}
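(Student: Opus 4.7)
The plan is to apply Weyl's equidistribution criterion on the compact product space $[0,\pi]^{|\mathcal{P}|}$. Since the products $\prod_{p\in\mathcal{P}} U_{m_p}(\cos\theta_p)$, with $U_m(\cos\theta) = \sin((m+1)\theta)/\sin\theta$ the Chebyshev polynomials of the second kind, span a uniformly dense subalgebra of $C([0,\pi]^{|\mathcal{P}|})$ by Stone--Weierstrass, it suffices to verify for every tuple of non-negative integers $(m_p)_{p\in\mathcal{P}}$ that
\[
\lim_{k+N\to\infty}\frac{1}{|H_k^*(N)|}\sum_{f\in H_k^*(N)}\prod_{p\in\mathcal{P}} U_{m_p}(\cos\theta_f(p)) \;=\; \prod_{p\in\mathcal{P}}\int_0^\pi U_{m_p}(\cos\theta)\, d\mu_p(\theta).
\]

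The key algebraic reduction is the identity $\lambda_f(p^m) = U_m(\cos\theta_f(p))$, which follows by induction on $m$ from the Hecke relation \eqref{eq:Heckemult} (applicable because $p \nmid N$) and the definition \eqref{eq: thetafp}. Combined with multiplicativity of $\lambda_f$ at coprime arguments, this collapses the joint Chebyshev moment into a single Hecke eigenvalue,
\[
\prod_{p\in\mathcal{P}} U_{m_p}(\cos\theta_f(p)) \;=\; \lambda_f\Bigl(\prod_{p\in\mathcal{P}} p^{m_p}\Bigr) \;=\; \lambda_f(n),
\]
so the whole problem reduces to evaluating $|H_k^*(N)|^{-1}\sum_f \lambda_f(n)$ for fixed $n$ as $k+N\to\infty$.

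I would then apply the Petersson trace formula on $S_k(N)$ with the harmonic weights $\omega_f$: the sum $\sum_f \omega_f \lambda_f(n)$ equals $\delta_{n,1}$ plus a Kloosterman--Bessel off-diagonal contribution that is $o(1)$ for fixed $n$ as $k+N\to\infty$. To descend to the newform subspace I would Möbius-invert over divisors of $N$ via the Atkin--Lehner decomposition of $S_k(N)$, and to remove the harmonic weight I would use the Kowalski--Michel inversion, which writes $\omega_f^{-1}$ as a convergent Dirichlet series in the $\lambda_f(m)$ amenable to the same trace formula, using the asymptotic $|H_k^*(N)|\sim (k-1)\varphi(N)/12$ from \eqref{eq: size Hk*N} for normalisation. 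The resulting limit factors as a product over $p\in\mathcal{P}$ of local integrals, which by Serre's explicit local calculation (recovered in \cite{ConDukFar} for $N=1$) are precisely $\int_0^\pi U_{m_p}(\cos\theta)\, d\mu_p(\theta)$; the shape \eqref{eq: p-adic plancherel} of $\mu_p$ is fixed by the requirement that its Chebyshev moments match the local Hecke algebra normalisation at $p$.

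The main obstacle is the bookkeeping in the weight-removal and newform-extraction steps uniformly as $k$ and $N$ vary jointly, and controlling the Kloosterman contributions in ranges where $k$ is small but $N$ is large (or vice versa). A conceptual point worth emphasising is why the limiting measure is the Plancherel $\mu_p$ rather than the Sato--Tate $\mu_{ST}$: here $p$ is fixed while the family varies, so one sees the local-at-$p$ Plancherel density on the unramified tempered principal series, whereas Sato--Tate appears in the opposite limit of fixed form and varying $p$. Finally, the hypothesis $p\nmid N$ for every $p\in\mathcal{P}$ is essential, since \eqref{eq:ALL theory} forces $\lambda_f(p)$ into the discrete set $\{0,\pm p^{-1/2}\}$ at ramified primes, so no continuous density can apply there.
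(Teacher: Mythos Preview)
Your reduction via Weyl's criterion and Stone--Weierstrass to the Chebyshev moments, and the collapse of the product $\prod_p U_{m_p}(\cos\theta_f(p))$ to a single $\lambda_f(n)$ with $n=\prod_p p^{m_p}$, is exactly the same as in Serre's argument (and as sketched in the paper around \eqref{eq:trace equi}--\eqref{eq: tr square}). The divergence is in how you propose to evaluate $|H_k^*(N)|^{-1}\sum_f \lambda_f(n)$. Serre, and the paper following him (Proposition~\ref{prop:serre}), observes that this sum is literally the normalised trace $\mathrm{Tr}(T_n^*)$ on the new subspace, and attacks it with the Eichler--Selberg trace formula combined with the newform/oldform M\"obius decomposition; this gives the unweighted eigenvalue sum directly, with no harmonic weight ever appearing, and the asymptotic $\mathrm{Tr}(T_n^*)\sim \tfrac{k-1}{12}\varphi(N)\,n^{-1/2}\mathbbm{1}_{n=\square}$ drops out from the identity term of Eichler--Selberg. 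Your route via the Petersson formula forces you to work with the harmonic weights $\omega_f$ and then undo them, which introduces exactly the Kowalski--Michel inversion and the Atkin--Lehner M\"obius step you flag as the main obstacle. Both approaches land in the same place, but Eichler--Selberg is the more economical tool here precisely because the target is an unweighted count; the Petersson route is what one would naturally reach for if one wanted effective error terms with explicit dependence on $n$, or harmonic-weighted statements, but for the bare equidistribution limit it adds a layer of bookkeeping that Serre's argument avoids entirely.
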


We denote the set on the left hand side as $F_{k,N}(\mathcal{P}, (I_p))$;

\begin{equation*}
    F_{k,N}(\mathcal{P}, (I_p)) := |\{ f \in H_k^*(N): \theta_f(p) \in I_p, \forall \  p \in \mathcal{P}\}|.
\end{equation*}

It is clear that provided $|\mathcal{P}|$ tends to infinity slow enough as $k+N$ tends to infinity, we have,

\begin{equation}
\label{eq: nf main term}
    F_{k,N}(\mathcal{P}, (I_p)) = (1 + o(1))\prod_{p\in \mathcal{P}} \mu_p(I_p) |H_k^*(N)| \textrm{ as } k+N \to \infty.
\end{equation}

However we must also ensure that $p \nmid N$ for all $p \in \mathcal{P}$, and as $\mathcal{P}$ increases to the set of all primes. Hence we must either fix $N=1$ and have $k$ tend to infinity, or have $N$ tend to infinity over integers with large enough prime factors. By proving the latter, we will see that we immediately get the former.

Letting $B(k,N)$ tend to infinity in this manner, and ensuring $N$ has no prime factors less than $B(k,N)$, we then have,

\begin{equation}
\label{eq: nf split}
    \sum_{f \in H_k^*(N)} n_f = \smashoperator{\sum_{\substack{n \leq B(k,N): \\ n=q^a,\  q \textrm{ prime,} \\ a \geq 1}}}n \cdot |\{f\in H_k^*(N): n_f = n\} | + \smashoperator{\sum_{\substack{f \in H_k^*(N): \\ B(k,N) < n_f \leq \log(kN)}}} n_f  \hspace{1em} +\sum_{\substack{f \in H_k^*(N): \\ \log(kN) < n_f}} n_f
\end{equation}

We write $\sum^1$, $\sum^2$ and $\sum^3$ as labels to refer to the first, second or third sum on the right-hand side of \eqref{eq: nf split} respectively.

If we let $\mathcal{P}_N(n) := \{ \textrm{primes } p \leq n: p\nmid N\}$, we have from \eqref{eq: nf main term}, and from the size of $|H_k^*(N)|$ \eqref{eq: size Hk*N}, that,

\begin{align*}
    \mathop{\sum\strut^1} &=  \smashoperator{\sum_{\substack{n \leq B(k,N) \\ n=q^a, \  q \textrm{ prime,} \\ a \geq 1}}} n \cdot F_{k,N}(\mathcal{P}_N(n), (I_{n})) \\
    &= \sum_{\substack{n \geq 1: \\ n=q^a}} n(1+o(1)) \prod_{p \in \mathcal{P}_N(n)} \mu_p(I_{n}) \cdot k\varphi(N) \\
    &= k\varphi(N) \sum_{\substack{n \geq 1: \\ n=q^a } } n\cdot \prod_{p \in \mathcal{P}_N(n)} \mu_p(I_{n}) + o(k\varphi(N)),
\end{align*}

where the sequence $(I_{n})$ corresponds to the intervals that $\theta_f(q)$ must be in such that $n_f = n = q^a$ for any $f \in H_k^*(N)$.

Now we apply the large sieve to $\sum^2$. Similarly to the prime case, we have the following.

\begin{corollary}
\label{cor: nf large sieve}
    For $k\geq 2$ an even integer, and $N$ prime or $N$ only with prime factors larger than $\log kN$, then there exists absolute constants $C_0, C_3 >0$ such that,

    \begin{equation*}
        \# \{ f \in H_k^*(N): n_f > 2\beta \} \ll k\varphi(N) \exp \left( -C_3 \frac{\beta}{\log \beta} \right),
    \end{equation*}
    provided,

    \begin{equation*}
    C_0 \ll \beta \ll \log(kN).
\end{equation*}

    All the implied constants are absolute.
\end{corollary}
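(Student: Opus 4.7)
My plan is to deduce Corollary \ref{cor: nf large sieve} from essentially the same argument used for Corollary \ref{cor: large sieve}, starting from the trivial inclusion
\begin{equation*}
\{f\in H_k^*(N) : n_f > 2\beta\} \ \subseteq\ \{f\in H_k^*(N) : p_f > 2\beta\}
\end{equation*}
that follows from $n_f \leq p_f$. This would let one quote Corollary \ref{cor: large sieve} directly if the hypotheses on $N$ matched, but that corollary is stated for squarefree $N$, while here $N$ is either prime or has all prime factors exceeding $\log(kN)$. The reconciling observation is that under the hypothesis $\beta \ll \log(kN)$, every prime $p\leq 2\beta$ is automatically coprime to $N$. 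Hence the set on the right above is sieved only by primes coprime to $N$, and the squarefree hypothesis of Corollary \ref{cor: large sieve}, which enters only via \eqref{eq:ALL theory} to control eigenvalues at primes dividing $N$, becomes vacuous in the present setting.

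Concretely, I would reproduce the large sieve step. Introduce the nonnegative weight
\begin{equation*}
W_f \ :=\ \prod_{p\leq 2\beta}\bigl(1 + c\,\lambda_f(p)\bigr)
\end{equation*}
for a suitable absolute constant $c>0$; by Deligne's bound the factors are nonnegative for appropriate $c$, and $W_f$ dominates the indicator of $\{\lambda_f(p)\geq 0 \text{ for all primes } p\leq 2\beta\}$. Expanding $W_f$ via Hecke multiplicativity \eqref{eq:Heckemult} writes it as a linear combination $\sum_n a_n\lambda_f(n)$ supported on squarefree integers $n$ built from primes $\leq 2\beta$. Applying Cauchy--Schwarz and a large sieve inequality for the $\lambda_f(n)$ (in the spirit of Proposition \ref{prop: large sieve}, or of Theorem \ref{thrm:weighted linnik sieve} after removing the harmonic weights) then bounds $\sum_f W_f$. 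The condition $\beta \ll \log(kN)$ is precisely what keeps the length of the sieved sum, of size roughly $\exp(O(\beta))$, in the range where the large sieve saves; after optimising $c$ one obtains the exponential savings $\exp(-C_3\,\beta/\log\beta)$ for an absolute $C_3>0$, provided $\beta$ is larger than some absolute $C_0$ to absorb loss from initial factors.

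The main obstacle is merely bookkeeping: verifying that every step in the proof of Corollary \ref{cor: large sieve} respects the weaker hypothesis on $N$. I expect this to be automatic because small primes never meet the sieve, and all $N$-dependence is inherited from $|H_k^*(N)|\ll k\varphi(N)$ via \eqref{eq: size Hk*N}. A slight refinement is available by exploiting that $n_f > 2\beta$ also yields $\lambda_f(p^j)\geq 0$ for all prime powers $p^j\leq 2\beta$, which could tighten the constant $C_3$ via higher Hecke relations, but this is unnecessary for the stated bound.
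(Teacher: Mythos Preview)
Your reduction via the inclusion $\{n_f>2\beta\}\subseteq\{p_f>2\beta\}$ is correct and is, in spirit, how the paper proceeds as well: both corollaries are read off from Proposition~\ref{prop: large sieve} with $\nu=1$, $\varepsilon_p=+1$ and $\mathscr{P}$ the set of all primes, the only extra wrinkle for $n_f$ being that one restricts to primes coprime to $N$ and uses the hypothesis on the prime factors of $N$ to ensure that no prime in $(\beta,2\beta]$ divides $N$, so that the positive-density lower bound \eqref{eq: E_beta lower bound} survives. Your observation that this makes the Atkin--Lehner input vacuous is the right one; note however that the squarefree hypothesis also enters through Lemma~\ref{lem: large sieve modular forms} (the Lau--Wu moment bound), not only through \eqref{eq:ALL theory}, so one should check compatibility there too.

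Where your proposal genuinely diverges from the paper is in the ``concrete'' sieve you sketch. The paper does \emph{not} use a multiplicative weight $W_f=\prod_{p\le 2\beta}(1+c\,\lambda_f(p))$; it uses the $2j$-th moment method of Lau--Wu (Lemma~\ref{lem: large sieve modular forms}), bounding $\sum_f\bigl|\sum_{\beta<p\le 2\beta}\lambda_f(p)/p\bigr|^{2j}$ from above via the large sieve and from below on the exceptional set, and then optimising $j\asymp \beta/\log\beta$ to extract the factor $\exp(-C\beta/\log\beta)$. Your $W_f$ is a legitimate majorant of the indicator, but the step ``apply Cauchy--Schwarz and a large sieve'' does not control the \emph{first} moment $\sum_f W_f$: large sieve inequalities such as Theorem~\ref{thrm:weighted linnik sieve} bound $\sum_f|\sum_n a_n\lambda_f(n)|^2$, not $\sum_f\sum_n a_n\lambda_f(n)$. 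To make your route work you would instead need a trace/Petersson estimate for $\sum_f\lambda_f(n)$ uniformly over all squarefree $n\le\prod_{p\le 2\beta}p\approx e^{2\beta}$, and then show that the accumulated error over $2^{\pi(2\beta)}$ terms is beaten by the main term; this is a different and more delicate argument, and it is not clear that ``optimising $c$'' alone produces $\exp(-C\beta/\log\beta)$. The cleanest fix is simply to invoke Proposition~\ref{prop: large sieve} (equivalently, rerun the proof of Corollary~\ref{cor: large sieve}) once you have observed that all sieving primes are coprime to $N$.
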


Note this corollary requires the level $N$ to be prime or have large enough prime factors, which is not a restriction required in the prime case. As in the prime case, we may assume $B(k,N) \ll \log kN$, and we ensure $N$ tends to infinity over integers with prime factors larger than $\log kN$.

As before, let $\beta_j = 2^{j-1} B(k,N)$ for $j = 0, \dots, m$, so that $\beta_m \gg \log kN$ ($k, N$ sufficiently large), and split $\sum^2$ into dyadic intervals,

\begin{equation*}
    \mathop{\sum\strut^2} \leq \sum_{j=0}^{m}  \sum_{\substack{f \in H_k^*(N): \\ 2\beta_{j} < n_f \leq 4\beta_j } } n_f \ll \sum_{j=0}^{m} k\varphi(N) 2^j B(k,N) \exp \left( -C_3 \frac{2^{j-1}B(k,N)}{\log (2^{j-1}B(k,N))} \right). 
\end{equation*}

As this is exactly the same as in Theorem $\ref{thrm:averagepf}$, see \eqref{eq: pf 2nd sum}, we may say $\sum^2 = o(k\varphi(N))$. The current upper bound on $n_f$ \eqref{eq:nf} is not strong enough to bound $\sum^3$ with the large sieve inequality in Corollary \ref{cor: nf large sieve}. Hence we use Theorem \ref{thrm: linnik nf} where $\varepsilon_p = +1$ for all $p \leq (\log kN)^A$,

\begin{equation*}
        \# \{ f \in H_k^*(N): n_f > (\log kN)^A \} \ll_{\varepsilon_1, \varepsilon_2} N^{1/2 + 1/2A + \varepsilon_1} k^{1/A+  \varepsilon_2}.
    \end{equation*}

Then using Corollary \ref{cor: nf large sieve} for the interval $\log kN < n_f \leq (\log kN)^A$, and Theorem \ref{thrm: linnik nf} for $n_f > (\log kN)^A$, we see, with the current upper bound on the least negative Hecke eigenvalue, $n_f \ll Q^{3/8} = (k^2N)^{3/8}$ \eqref{eq:nf},

\begin{align*}
    \mathop{\sum\strut^3} &= \sum_{\substack{f \in H_k^*(N): \\ \log kN < n_f \leq (\log kN)^A}} n_f + \sum_{\substack{f \in H_k^*(N): \\ n_f > (\log kN)^A}} n_f  \\
    &\ll (\log kN)^A k \varphi(N) \exp \left( -C_4 \frac{\log kN}{\log \log kN} \right) + (k^2N)^{3/8}N^{1/2 + 1/2A+\varepsilon_1}k^{1/A + \varepsilon_2} \\
    &\ll k\varphi(N) + N^{7/8 + 1/2A+\varepsilon_1}k^{3/4 + 1/A+\varepsilon_2}.
\end{align*}

To ensure we have $\sum^3 = o(k \varphi(N))$ we see we need $A>4$. Then with sufficiently large $k$ and $N$ subject to the appropriate conditions we have the desired bound.

It remains to show that the $(I_{n})$ are as in the statement of Theorem \ref{thrm:averagenf}. This can be seen from a special form of Hecke multiplicity, involving Chebyshev polynomials of the second kind, defined by,

\begin{equation}
\label{eq:chebyshev}
    X_n(\theta) := \frac{\sin((n+1)\theta)}{\sin(\theta)}, \hspace{0.5pt} \textrm{ for } \theta \in [0, \pi].
\end{equation}

Then the following can be seen from multiplicity of Hecke eigenvalues,

\begin{equation}
\label{eq:hecke chebyshev mult}
    X_n(\theta_f(p)) = \lambda_f(p^n).
\end{equation}

When we write this as,

\begin{equation*}
    \sin((n+1)\theta_f(p)) = \lambda_f(p^n) \sin (\theta_f(p)),
\end{equation*}

it is obvious that if $\lambda_f(p^j) \geq 0$ for some $j$ we have,

\begin{equation*}
    \theta_f(p) \in A_j =
    \begin{cases}
         [0, \frac{\pi}{j+1}] \cup [\frac{2\pi}{j+1}, \frac{3\pi}{j+1}] \cup ... \cup [\frac{j\pi}{j+1}, \pi] &\text{ if } j \text{  is even,} \\
        [0, \frac{\pi}{j+1}] \cup [\frac{2\pi}{j+1}, \frac{3\pi}{j+1}] \cup ... \cup [\frac{(j-1)\pi}{j+1}, \frac{j\pi}{j+1}]&\text{ if } j \text{ is odd.}
    \end{cases}
\end{equation*}

If $\lambda_f(p^j) \geq 0$ for all $j \in \{ 1, ..., n\}$ for some $n$, then we have,

\begin{equation*}
    \theta_f(p) \in A_n^* = \bigcap_{j=1}^{n} A_j = \left[0, \frac{\pi}{n+1}\right].
\end{equation*}

On the other hand, for a prime $q$, if $\lambda_f(q^n) < 0$, then $\theta_f(q) \in A_n^c$, the complement of $A_n$ in $[0, \pi]$, given by

\begin{equation*}
    A_n^c = \begin{cases}
        (\frac{\pi}{n+1},\frac{2\pi}{n+1}) \cup ... \cup (\frac{(n-1)\pi}{n+1}, \frac{n\pi}{n+1}) &\text{ if } n \text{ is even,} \\
        (\frac{\pi}{n+1},\frac{2\pi}{n+1}) \cup ... \cup (\frac{n\pi}{n+1}, \pi) &\text{ if } n \text{ is odd.}
    \end{cases}
\end{equation*}

If $\lambda_f(q^n)$ is the first prime power of $q$ with negative eigenvalue, we have,

\begin{equation*}
    \theta_f(q) \in B_n^* = \left( \bigcap_{j=1}^{n-1}A_j \right) \cap A_n^c = \left(\frac{\pi}{n+1} , \frac{\pi}{n}\right).
\end{equation*}

Thus, if for some newform $f \in H_k^*(N)$ we have $n_f = q^n$ for some prime $q$ and $n \geq 1$, then we must have that for all primes $p < q^n$ but not including $q$, that

\begin{equation*}
    \theta_f(p) \in A_{a_p(q^n)}^*,  
\end{equation*}

where $a_p(q^n) = \lfloor n \log q/ \log p \rfloor$, the greatest power such that $p^{a_p(q^n)} < q^n$. At $q$ we have,

\begin{equation*}
    \theta_f(q) \in B_n^*.
\end{equation*}

These intervals are exactly those stated in Theorem \ref{thrm:averagenf}, where for $B_n^*$ we have used the closed interval, as $\mu_p((\alpha, \beta)) = \mu_p([\alpha, \beta])$.

\section{Proof of auxiliary results}

\label{sec:aux proof}
In this section we prove auxiliary results pertaining to the proofs of the main theorems.

\subsection{Proof of Lemma \ref{lem:pf half}}
\label{subsec:proof equidist}

We are naturally required to consider the cases of primes that divide the level and primes that do not. The structure of eigenvalues at primes of both types is well-known, and outlined in Section \ref{sec:motivation}. Indeed the distribution of eigenvalues at $p\nmid N$ is also well-known, via the Sato-Tate conjecture at the global scale \cite{BarGerHarTay} and by the $p$-adic Plancherel measure \eqref{eq: p-adic plancherel} at the local scale \cite{Ser}. In either case, they are equally distributed between positive and negative signs. We will see that the same applies for eigenvalues at primes that divide the level exactly.

We will refer to eigenvalues at primes that divide the level as `Atkin-Lehner eigenvalues' due to the development of the theory of newforms by Atkin and Lehner \cite{AtkLeh}, and its generalisations by Atkin and Li \cite{AtkLi}. We repeat the structure of Atkin-Lehner eigenvalues below.

\begin{equation}
\label{eq:A-L e values}
    \lambda_f(p) = \begin{cases}
        0 &\textrm{ if } p^2 |N, \\
        \pm p^{-1/2} &\textrm{ if } p|N \textrm{ but } p^2 \nmid N.
    \end{cases}
\end{equation}

Let $\mathcal{P}$ denote a finite set of primes. We can see the statement of Lemma \ref{lem:pf half} is equivalent to showing,

\begin{equation}
\label{eq:equidist char fct}
    \lim_{k+N \to \infty}\frac{1}{|H_k^*(N)|} \sum_{f\in H_k^*(N)} \prod_{p \in \mathcal{P}}\mathbbm{1}_{[0,2]}(\lambda_f(p)) = \prod_{p \in \mathcal{P}} \frac{1}{2},
\end{equation}

where $\mathbbm{1}_{[0,2]}$ is the indicator function of the set $[0,2]$. By approximating the indicator function with continuous functions, it is sufficient to show, for all real, continuous functions $g_p$ indexed by primes $p\in \mathcal{P}$, defined on $[-2,2]$, that

\begin{equation}
\label{eq:equidist}
    \lim_{k+N \to \infty}\frac{1}{|H_k^*(N)|} \sum_{f\in H_k^*(N)} \prod_{p \in \mathcal{P}}g_p(\lambda_f(p)) - \prod_{p \in \mathcal{P}} \langle g_p, \rho_{p,N} \rangle_{[-2,2]} = 0,
\end{equation}

with $\rho_{p,N}$ the measure describing the distribution of $\lambda_f(p)$ in $[-2,2]$ over $f\in H_k^*(N)$, and $\langle g_p,\rho_{p,N} \rangle_{[-2,2]}$ defined by,

\begin{equation*}
    \langle g_p, \rho_{p,N} \rangle_{[-2,2]} = \langle g_p, \rho_{p,N} \rangle = \int_{-2}^2 g_p(x) \rho_{p,N}(x).
\end{equation*}

The measures $\rho_{p,N}$ depend on the divisibility of $N$ by $p$, so therefore they depend on $N$, and we will see that $\langle 1, \rho_{p,N} \rangle_{[0,2]} = 1/2$ for all $p \in \mathcal{P}$ with the appropriate measures. The case where $p\nmid N$ for all $p \in \mathcal{P}$, with $\rho_{p,N}=\mu_p$ the $p$-adic Plancherel measure, has been shown by Serre \cite[Theorems 2,3]{Ser}.

Let $\mathcal{P}=\mathcal{P}_1 \cup \mathcal{P}_2$ be any disjoint union of primes in $\mathcal{P}$, so $\mathcal{P}_1 \cap \mathcal{P}_2 = \emptyset$, and set $N = \left(\prod_{p \in \mathcal{P}_2} p\right) \cdot N'$, such that $(N',p)=1$ for all $p \in \mathcal{P}_1$, and so that $N$ is square-free. As mentioned above, it remains to show the case when $\mathcal{P}_2 \neq \emptyset$.

We consider \eqref{eq:equidist} in terms of traces of Hecke operators. That is, we let $S_k(N)$ denote the space of cuspforms of weight $k$ and level $N$. Then $S_k^{\text{new}}(N)$ denotes the space of primitive cuspforms, so that $H_k^*(N)$ is a set of newforms that generate the set $S_k^{\text{new}}(N)$. For each $n \geq 1$ we have $T_n(N,k)$ the $n^{\text{th}}$ Hecke operator acting on $S_k(N)$. Similarly, $T_n^{\text{new}}(N,k)$ is the $n^{\text{th}}$ Hecke operator acting on $S_k^{\text{new}}(N)$ (and by definition also on $H_k^*(N)$).

Then the trace of $T_n^{\text{new}}(N,k)$ returns the sum of the eigenvalues at $n$,

\begin{equation*}
    n^{-\frac{k-1}{2}}\mathrm{Tr}T_n^{\text{new}}(N,k) = \sum_{f \in H_k^*(N)} \lambda_f(n).
\end{equation*}

For brevity, we write $T_n^*$ to mean $T_n^{\text{new}}(N,k)$ normalised by $n^{(k-1)/2}$. By the linearity of the trace, for some degree $m$ polynomial $P$ with real coefficients the trace of $P(T_n^*)$ is equal to the sum over $f$ of $P(\lambda_f(n))$. Then we may rewrite \eqref{eq:equidist} as,

\begin{equation}
\label{eq:trace equi}
    \lim_{k+N' \to \infty} \frac{1}{|H_k^*(N)|} \mathrm{Tr}\Big( \prod_{p \in \mathcal{P}}P_p(T_p^*)\Big) = \prod_{p \in \mathcal{P}} \langle P_p, \rho_p \rangle,
\end{equation}

for polynomials $P_p$ indexed by primes $p \in \mathcal{P}$. We note that we are taking the limit in $k$ and $N'$, so as to fix the primes divisible by the level, and thus $\rho_p$ is no longer dependent on $N$. As polynomials are dense in the space of continuous functions, to prove \eqref{eq:equidist char fct} it suffices to show \eqref{eq:trace equi} for any polynomial of degree $m$, for all $m \geq 0$, and for any arbitrary partition $\mathcal{P}_2$ of $\mathcal{P}$. Our measures are as follows,

\begin{equation*}
    \rho_p = \begin{cases}
        \mu_p &\text{ for } p \in \mathcal{P}_1, \\
        \frac{1}{2}\delta_{\Omega_p} &\text{ for } p \in \mathcal{P}_2, \  \Omega_p = \{-p^{-1/2}, p^{-1/2}\}
    \end{cases}
\end{equation*}

where $\mu_p$ is the $p$-adic Plancherel measure, and $\delta$ is the Dirac measure for the set $\Omega_p = \{ -p^{-1/2}, p^{-1/2}\}$. For the polynomials we let $m_p \geq 0$ be integers indexed by $p \in \mathcal{P}$ and let,

\begin{equation}
\label{eq:polynomial}
    P_{m_p}(T_p^*) = \begin{cases}
        X_{m_p}(T_p^*) &\text{ for } p \in \mathcal{P}_1, \\
        (T_p^*)^{m_p} &\text{ for } p \in \mathcal{P}_2,
    \end{cases}
\end{equation}

where $X_m$ are Chebyshev polynomials of the second kind \eqref{eq:chebyshev}. By the Hecke relations we have for $p \in \mathcal{P}_1$,

\begin{equation}
\label{eq:X_mT_p}
    X_m(T_p^*) = T_{p^m}^*.
\end{equation}

Similarly, by the total multiplicativity of the Atkin-Lehner eigenvalues, when $p \in \mathcal{P}_2$ we have,

\begin{equation}
\label{eq:ALT_p}
    (T_p^*)^{m} = T_{p^m}^*.
\end{equation}

For the right hand side of \eqref{eq:trace equi}, we have (e.g. \cite{Ser}) for $p \in \mathcal{P}_1$,

\begin{equation}
\label{eq:X_m mu_p}
    \langle X_m, \mu_p \rangle = \begin{cases}
        p^{-m/2} &\text{ if } m \text{ is even}, \\
        0 &\text{ if } m \text{ is odd}.
    \end{cases}
\end{equation}

Furthermore, for $p \in \mathcal{P}_2$, we have, 

\begin{equation}
\label{eq:x^m 1/2}
    \left\langle x^m , \frac{1}{2}\delta_{\Omega_p} \right\rangle = \frac{1}{2} ( (-p^{-1/2})^m + (p^{-1/2})^m ) = \begin{cases}
        p^{-m/2} &\text{ if } m \text{ is even}, \\
        0 &\text{ if } m \text{ is odd},
    \end{cases}
\end{equation}

Let  $n = \prod_{p\in\mathcal{P}} p^{m_p}$. Using the polynomials \eqref{eq:polynomial}, we have by \eqref{eq:X_mT_p}, \eqref{eq:ALT_p}, \eqref{eq:X_m mu_p} and \eqref{eq:x^m 1/2} that we are left to show, for any $m_p \geq 0$,

\begin{equation}
\label{eq: tr square}
    \lim_{k+N' \to \infty} \frac{1}{|H_k^*(N)|} \mathrm{Tr}(T_n^*) = \begin{cases}
        n^{-1/2} &\text{ if } n \text{ is a square}, \\
        0 &\text{ otherwise},
    \end{cases}
\end{equation}

via multiplicativity of the Hecke operators. We briefly sketch the case $\mathcal{P}_2 = \emptyset$, shown by Serre, where $N'=N$. We use the following proposition:

\begin{proposition}[{\cite{Ser}}]
\label{prop:serre}
    If $(n,N) =1$ then we have for all $\varepsilon > 0$,

    \begin{equation*}
    \left| \mathrm{Tr}(T_n^*) - \frac{k-1}{12}\chi(\sqrt{n})n^{-1/2}\psi^*(N) \right| \ll_n N^{1/2+2\varepsilon},
    \end{equation*}
    where $\chi$ is the trivial Dirichlet character modulo $N$. We have that $\psi^*(N)$ is multiplicative and defined on primes as follows,

    \begin{equation*}
        \psi^*(p^{\alpha}) = \begin{cases}
            1 &\text{ if } \alpha = 0, \\
            p-1 &\text{ if } \alpha = 1, \\
            p^2 - p - 1 &\text{ if } \alpha = 2, \\
            p^{\alpha} - p^{\alpha- 1} - p^{\alpha - 2} + p^{\alpha -3} &\text{ if } \alpha > 2.
        \end{cases}
    \end{equation*}
\end{proposition}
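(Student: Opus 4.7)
The plan is to apply the Eichler--Selberg trace formula to $T_n$ acting on the full space $S_k(\Gamma_0(N))$ (trivial character), and then pass to the new subspace via the Atkin--Lehner decomposition together with a Möbius-type inversion on divisors of $N$. The normalisation by $n^{(k-1)/2}$ simply converts the classical main term $\sigma_{k-1}(\sqrt{n})$-type expression into the $n^{-1/2}$ factor appearing in the statement.

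First I would write the Eichler--Selberg trace formula for $\mathrm{Tr}(T_n \mid S_k(\Gamma_0(N)))$ under the assumption $(n,N)=1$. This splits into four contributions: the identity term, the elliptic term (a sum over integers $t$ with $t^2 < 4n$ weighted by class numbers of imaginary quadratic orders), the hyperbolic term (a sum over $t^2 > 4n$ and over divisors of $n$), and the unipotent/parabolic term. The identity contribution is nonzero precisely when $t^2 = 4n$, i.e.\ when $n$ is a perfect square, and then equals $\tfrac{k-1}{12}\cdot \sqrt{n}^{\,k-1}\cdot \psi(N)$, where $\psi(N)=[SL_2(\mathbb{Z}):\Gamma_0(N)]=N\prod_{p\mid N}(1+p^{-1})$. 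After the $n^{(k-1)/2}$ normalisation, this gives $\tfrac{k-1}{12}n^{-1/2}\psi(N)$. All the remaining terms depend on $n$ only through sums of divisors or class numbers, so they are $O_n(1)$ in $k$, and their dependence on $N$ can be bounded by $O_n(N^{1/2+\varepsilon})$ via standard estimates for the local factors (Gauss sums, Kloosterman-type sums, and divisor bounds), exactly as in Serre's derivation.

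Next I would pass from $S_k(N)$ to $S_k^{\mathrm{new}}(N)$ using the Atkin--Lehner decomposition
\[
S_k(N) = \bigoplus_{M\mid N}\bigoplus_{d\mid N/M} B_d\bigl(S_k^{\mathrm{new}}(M)\bigr),
\]
where the operators $B_d$ preserve Hecke eigenvalues at primes coprime to $N$. Since $(n,N)=1$, this gives
\[
\mathrm{Tr}(T_n\mid S_k(N)) \;=\; \sum_{M\mid N}\sigma_0(N/M)\,\mathrm{Tr}(T_n\mid S_k^{\mathrm{new}}(M)).
\]
Möbius-inverting this over the divisor lattice produces a multiplicative coefficient $\beta$ such that
\[
\mathrm{Tr}(T_n^{\mathrm{new}}(N,k)) \;=\; \sum_{M\mid N}\beta(N/M)\,\mathrm{Tr}(T_n\mid S_k(M)).
\]
Substituting the main term $\tfrac{k-1}{12}\chi(\sqrt{n})n^{-1/2}\psi(M)$ from step one into each summand yields a new main term of the form $\tfrac{k-1}{12}\chi(\sqrt{n})n^{-1/2}\psi^*(N)$, where $\psi^* := \psi * \beta$ is the arithmetic convolution. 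A direct local computation at each prime power $p^\alpha \| N$ then verifies that $\psi^*$ has precisely the piecewise formula stated in the proposition (the two-step Möbius inversion collapses to the closed forms $1, p-1, p^2-p-1, p^\alpha - p^{\alpha-1} - p^{\alpha-2} + p^{\alpha-3}$).

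The error terms combine to the claimed $O_n(N^{1/2+2\varepsilon})$: the bound $O_n(M^{1/2+\varepsilon})$ from the trace formula on each $S_k(M)$ gets multiplied by at most $\sigma_0(N/M) \ll N^\varepsilon$ factors, and summed over $M\mid N$ this stays within $O_n(N^{1/2+2\varepsilon})$. The main obstacle is the second step: verifying the exact shape of $\psi^*$ on prime powers requires careful bookkeeping of the Atkin--Lehner multiplicities $\sigma_0(N/M)$ combined with Möbius inversion, and checking that the two-level recursion produces the specific coefficients in the $\alpha\ge 2$ cases. The bounds on the non-identity terms of the trace formula are essentially standard (Eichler, Selberg, Oesterlé), so those pose no real difficulty.
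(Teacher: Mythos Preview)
Your proposal is correct and follows essentially the same route as Serre's original argument, which is precisely what the paper invokes: the paper does not prove this proposition itself but cites \cite{Ser}, and later (in the paragraph following \eqref{eq: trace square AL}) sketches exactly the two steps you describe --- the newform trace formula expressing $\mathrm{Tr}(T_n^{\mathrm{new}}(N,k))$ as a weighted sum of $\mathrm{Tr}(T_n(M,k))$ over $M\mid N$, followed by bounding each $\mathrm{Tr}(T_n(M,k))$ via the Eichler--Selberg trace formula. Your identification of the main term, the M\"obius inversion producing $\psi^*$, and the $O_n(N^{1/2+2\varepsilon})$ error aggregation all match the standard treatment.
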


First, we can see that for square-free $N$ we have that $\psi^*(N) = \varphi(N)$ the Euler totient function. Then for this proof we may replace $\psi^*(N)$ in Proposition \ref{prop:serre} with $\varphi(N)$.

Following the proof \cite[Theorem 2]{Ser}, we have from the definition of the trace that $\mathrm{Tr}(T_1^*) = |H_k^*(N)|$, thus by Proposition \ref{prop:serre} we can see that $|H_k^*(N)| \sim \frac{k-1}{12}\varphi(N)$. Then dividing the left hand side of Proposition \ref{prop:serre} by $\frac{k-1}{12}\varphi(N)$ we see the first term is now equivalent to the left hand side of \eqref{eq: tr square}. Further, we can see that dividing the right hand side of Proposition \ref{prop:serre} by $\frac{k-1}{12}\varphi(N)$ implies that the right hand side will tend to 0 as $k+N$ tends to infinity, proving \eqref{eq: tr square} provided $N$ tends to infinity with $(n,N) = 1$.

We show \eqref{eq: tr square} holds for $(n,N)>1$ provided $N$ is square-free via Proposition \ref{prop:serre}. Let $\prod_{p \in \mathcal{P}_1} p^{m_p} = P$ and $\prod_{p \in \mathcal{P}_2} p^{m_p} = Q$ so that $n=PQ$, and we have that $(N,P)=1$ but $(N,Q)>1$. First we note that by total multiplicativaty of Atkin-Lehner eigenvalues, and \eqref{eq:A-L e values}, for some integer $m \geq 0$ (and $p|N$),

\begin{equation}
\label{eq: trace square AL}
    \mathrm{Tr}(T_{p^{2m}}^*) = \sum_{f \in H_k^*(N)} \lambda_f(p^{2m}) = \sum_{f \in H_k^*(N)} \lambda_f(p)^{2m} = \frac{|H_k^*(N)|}{p^m}.
\end{equation}

Suppose $Q$ is a square. Then $m_p$ is even for each $p \in \mathcal{P}_2$. We have,

\begin{equation*}
    \mathrm{Tr}(T_n^*) = \mathrm{Tr}(T_P^*T_Q^*)= \sum_{f \in H_k^*(N)} \lambda_f(P)\prod_{p\in\mathcal{P}_2} \lambda_f(p)^{m_p}= \prod_{p \in \mathcal{P}_2} p^{-\frac{m_p}{2}} \mathrm{Tr}(T_P^*).
\end{equation*}

And we may appeal to Proposition \ref{prop:serre}, in the limit for $N'$. Now suppose $Q$ is not a square. We may similarly factor out even prime powers in the factorisation of $Q$, leaving us only with the case that $Q$ is square-free. In this case we aim to show $Tr(T_n^*) = 0$ in the limit as $N'$ tends to infinity. We approach this by showing Proposition \ref{prop:serre} holds for $(n,N)>1$ where $n$ is not a square.

Indeed, the proof of Proposition \ref{prop:serre} in \cite{Ser} is achieved by separating $\mathrm{Tr}(T_n^{\textrm{new}}(N,k))$ into a weighted sum of $\mathrm{Tr}(T_n(M,k))$ for divisors $M$ of $N$, via the newform trace formula. From there, the proof is finished by computing the traces of $T_n(M,k)$ with the Eichler-Selberg trace formula, which we note is valid on all $n$, including $(n,M)>1$ (see e.g. \cite{SchVlu}). Then here we show the newform trace formula for $(n,N)=Q>1$ (that is, $Q$ square-free) is the same as that when $(n,N)=1$.

For trivial character and square-free level we have, following the notation of \cite{CohStr},

\begin{equation*}
    \mathrm{Tr}(T_n^{\text{new}}(N,k)) = \sum_{M|N} \sum_{\substack{ d|M \\ d^2|n}} d^{k-1} \beta_{n/d^2} \left( \frac{N}{M} \right) \mathrm{Tr}\left(T_{n/d^2}\left(\frac{M}{d}, k\right)\right),
\end{equation*}

where for $m \geq 1$ we have that $\beta_m(n)$ is defined on prime powers:

\begin{equation*}
    \beta_m(p^{\alpha}) = \begin{cases}
        -2 &\text{ if } p \nmid m, \alpha = 1, \\
        1 &\text{ if } p \nmid m, \alpha = 2, \\
        0 &\text{ if } p \nmid m, \alpha \geq 3, \\
        \mu(p^{\alpha}) &\text{ if } p | m,
    \end{cases}
\end{equation*}

and $\mu$ is the M\"obius function. For the inner sum, recall that when $Q$ is square-free we have $(N,n)=Q$ so for a divisor $M$ of $N$ we have that if $d|M$ and $d^2|n$ then we must have $d^2|Q$. However, as we have that that $Q$ is square-free, the inner sum only consists of the $d=1$ term. Thus,

\begin{equation*}
    \mathrm{Tr}(T_n^{\text{new}}(N,k)) = \sum_{M|N} \beta_{n}\left( \frac{N}{M} \right) Tr(T_{n}(M,k)).
\end{equation*}

This is the same formula as when $(n,N) = 1$, and hence we see from \cite[Theorem 2]{Ser}, where the $\mathrm{Tr}(T_n(M,k))$ are bounded via the Eichler-Selberg trace formula, that Proposition \ref{prop:serre} holds for $(n,N)>1$, with $N$ square-free, $n=PQ$ and $(N,n)=Q$. Then \eqref{eq: tr square} holds for $N =  \left(\prod_{p \in \mathcal{P}_2} p\right) \cdot N'$ for any arbitrary partition $\mathcal{P}_2$ of $\mathcal{P}$. Finally we can see $\langle 1, \rho_p \rangle_{[0,2]} = 1/2$, as required.

\subsection{Proof of Corollaries \ref{cor: large sieve} and \ref{cor: nf large sieve}}

These corollaries are consequences of the following large sieve inequality, which itself is a modified version of the large sieve inequality appearing in \cite[Theorem 2]{klsw}.

\begin{proposition}
\label{prop: large sieve}
Let $k \geq 2$ and $N$ be square-free. Fix $\nu \geq 1$ an integer. Let $\mathscr{P}$ be a set of prime numbers of positive density in the following sense:

\begin{equation}
\label{eq: positive prime density}
    \sum_{\substack{s<p\leq 2s \\ p \in \mathscr{P}}} \frac{1}{p} \geq \frac{\delta}{\log s}, \hspace{2cm} (s \geq s_0)
\end{equation}

for some constants $\delta > 0$ and $s_0>0$. Let $(\varepsilon_p)_{p \in \mathscr{P}}$ be a sequence of signs indexed by primes in $\mathscr{P}$. Then there exists two positive constants $C_0$ and $C_1$ such that

\begin{equation*}
   |\{ f \in H_k^*(N): \varepsilon_p\lambda_f(p^{\nu})>0 \textrm{ for } p \in \mathscr{P},\  \beta <p\leq 2 \beta\}| \ll_{\mathscr{P}, \nu} k\varphi(N) \exp \left(-C_1 \frac{\beta}{\log \beta}\right),
\end{equation*}

provided,

\begin{equation*}
    C_0 \ll_{\nu, \mathscr{P}} \beta \ll_{\nu, \mathscr{P}} \log(kN).
\end{equation*}

The constants $C_0, C_1$ and all implied constants depend only on $\nu$ and $\mathscr{P}$.
\end{proposition}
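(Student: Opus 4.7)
My approach is to construct polynomial majorants of the positivity indicators, expand them via Hecke multiplicativity, and apply Theorem \ref{thrm:weighted linnik sieve} together with Cauchy-Schwarz; the decisive input is that each positivity region has Sato-Tate measure strictly below $1$, uniformly in the prime. Set $\mathcal{P}_\beta := \{p \in \mathscr{P} : \beta < p \leq 2\beta\}$, so that \eqref{eq: positive prime density} gives $r := |\mathcal{P}_\beta| \geq \delta \beta/\log \beta + O(1)$. By \eqref{eq:ALL theory} and total multiplicativity of Atkin-Lehner eigenvalues, primes in $\mathcal{P}_\beta$ dividing $N$ contribute trivially or a factor of $1/2$, so I may assume $p \nmid N$ throughout. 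Writing $\lambda_f(p^\nu) = X_\nu(\theta_f(p))$ via \eqref{eq:hecke chebyshev mult}, the condition $\varepsilon_p \lambda_f(p^\nu) > 0$ becomes $\theta_f(p) \in A_p \subset [0,\pi]$, a union of subintervals determined only by $\varepsilon_p$ and $\nu$, and $\mu_{ST}(A_p) \leq \kappa_0 < 1$ uniformly in $p$ with $\kappa_0 = \kappa_0(\nu)$.

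The heart of the argument is constructing, for each $p \in \mathcal{P}_\beta$, a polynomial $P_p$ of fixed degree $D = D(\nu)$ in $\cos\theta$ with $P_p \geq \mathbbm{1}_{A_p}$ pointwise on $[0,\pi]$ and $\|P_p\|_{L^2(\mu_{ST})}^2 \leq \kappa < 1$ for some $\kappa = \kappa(\nu)$. A Beurling-Selberg majorant adapted to the semicircle weight yields $\|P_p\|_\infty \leq 1 + O(1/D)$ and $\|P_p\|_{L^1(\mu_{ST})} \leq \mu_{ST}(A_p) + O(1/D)$, so $\|P_p\|_{L^2(\mu_{ST})}^2 \leq \|P_p\|_\infty \|P_p\|_{L^1(\mu_{ST})} < 1$ for $D$ large enough (depending on $\kappa_0$). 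Expanding $P_p = \sum_{m=0}^{D} c_m^{(p)} X_m$ in the $\mu_{ST}$-orthonormal Chebyshev basis (so that $\sum_m (c_m^{(p)})^2 = \|P_p\|_{L^2(\mu_{ST})}^2$) and using the pointwise identity $X_{m_p}(\theta_f(p)) = \lambda_f(p^{m_p})$ together with Hecke multiplicativity across distinct primes, one obtains
\begin{equation*}
\prod_{p \in \mathcal{P}_\beta} P_p(\theta_f(p)) = \sum_{n \leq M} a_n \lambda_f(n), \quad M = (2\beta)^{Dr}, \quad \|a\|_2^2 = \prod_p \|P_p\|_{L^2(\mu_{ST})}^2 \leq \kappa^r.
\end{equation*}

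Denoting by $\mathcal{B}$ the set appearing in the proposition, we have $\mathbbm{1}_{\mathcal{B}}(f) \leq \prod_p P_p(\theta_f(p))$, so Cauchy-Schwarz followed by Theorem \ref{thrm:weighted linnik sieve} yields
\begin{equation*}
\sum_{f \in H_k^*(N)} \omega_f \mathbbm{1}_{\mathcal{B}}(f) \leq \Bigl(\sum_f \omega_f\Bigr)^{1/2} \Bigl(\sum_f \omega_f \Bigl|\sum_n a_n \lambda_f(n)\Bigr|^2\Bigr)^{1/2} \ll \|a\|_2 \leq \kappa^{r/2},
\end{equation*}
valid provided $M \leq Nk^\eta$, i.e.\ $Dr \log(2\beta) \leq \log(Nk^\eta)$, which gives the upper hypothesis $\beta \ll \log(kN)$. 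Converting to the natural count using $\omega_f^{-1} \ll k\varphi(N)(\log kN)^{O(1)}$ (Hoffstein-Lockhart) yields $|\mathcal{B}| \ll k\varphi(N)(\log kN)^{O(1)} \kappa^{r/2}$; the logarithmic prefactor is absorbed into the exponential by taking $\beta \geq C_0(\nu,\mathscr{P})$ and possibly letting $D$ grow mildly with $\log\log(kN)$, giving $C_1 \approx \tfrac{\delta}{2}\log(1/\kappa)$. The main obstacle is the polynomial construction — producing $P_p$ with $L^2(\mu_{ST})$-norm strictly below $1$ uniformly in $p$. This is a classical Beurling-Selberg extremal problem for the semicircle weight, feasible precisely because $\kappa_0(\nu) < 1$; the resulting dependence of $C_0, C_1$ on $\nu$ and $\mathscr{P}$ enters through $\kappa_0(\nu)$ and the density constant $\delta$ in \eqref{eq: positive prime density}.
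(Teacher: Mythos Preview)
Your route---majorise each sign indicator by a fixed-degree Chebyshev polynomial $P_p$, multiply over $p\in\mathcal{P}_\beta$, and feed the resulting linear form into the harmonically weighted Theorem~\ref{thrm:weighted linnik sieve}---is genuinely different from the paper's. There one takes high ($2j$-th, with $j\asymp\beta/\log\beta$) moments of $\sum_p\varepsilon_p\lambda_f(p^\nu)/p$ directly against the \emph{unweighted} Lau--Wu sieve (Lemma~\ref{lem: large sieve modular forms}), and disposes of the possibility that all $|\lambda_f(p^\nu)|$ are small by isolating auxiliary exceptional sets $\mathscr{E}_\beta^{\nu'}$ on which $\big|\sum_p\lambda_f(p^{2\nu'})/p\big|$ is large.

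The real gap in your argument is the weight removal. Hoffstein--Lockhart gives only $\omega_f^{-1}\ll kN(\log kN)^{O(1)}$, so after converting you obtain $|\mathcal{B}|\ll k\varphi(N)\,(\log kN)^{O(1)}\,\kappa^{r/2}$. You claim this logarithmic prefactor is ``absorbed into the exponential by taking $\beta\ge C_0$'', but that is false: for $\beta$ bounded---indeed for any $\beta\ll(\log\log kN)^2$---the factor $\kappa^{r/2}=\exp(-c\beta/\log\beta)$ is bounded below by a positive constant, so nothing is absorbed and your bound actually exceeds the trivial $|H_k^*(N)|\ll k\varphi(N)$. Letting $D$ grow with $\log\log(kN)$ does not help either: it only shrinks the admissible range of $\beta$ (since $\log M\approx D\beta$ must stay $\ll\log(kN)$) while leaving $\kappa^{r/2}$ essentially unchanged when $r$ is bounded. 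The proposition demands an implied constant depending only on $\nu,\mathscr{P}$ across the entire range $C_0\le\beta\ll\log(kN)$, and your argument does not deliver this. The paper avoids the issue because Lemma~\ref{lem: large sieve modular forms} already carries $k\varphi(N)$ as its leading coefficient, with no harmonic weights to remove. A secondary point: the Beurling--Selberg majorant of an interval does not in general satisfy $\|P_p\|_\infty\le 1+O(1/D)$ (it overshoots near the endpoints by a fixed amount), so the chain $\|P_p\|_{L^2}^2\le\|P_p\|_\infty\|P_p\|_{L^1}<1$ is not justified as written; the polynomials you need do exist, but via a different construction---for instance, uniformly approximate a smooth bump equal to $1+\epsilon$ on $A_p$ and supported in a slight enlargement, then note $\|P_p\|_{L^2}^2\to\mu_{ST}(A_p)\le\kappa_0(\nu)<1$.
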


Note that we do not need the requirement of $p$ coprime to $N$.

When $\mathscr{P}$ is the set of all primes, $\nu=1$ and $\varepsilon_p = 1$ for all primes $p$, we see that Proposition \ref{prop: large sieve} provides an upper bound on the set of newforms with $p_f$ greater than $2\beta$, hence we have Corollary \ref{cor: large sieve}. Similarly, we may get Corollary \ref{cor: nf large sieve} although we cannot include eigenvalues at primes that divide the level. We will see that this restriction requires us to have $N$ be prime, or only have prime factors larger than $\log kN$.

This proof follows the same structure as its adapted counterpart in \cite[Theorem 2]{klsw}. The intuition is that if $\varepsilon_p \lambda_f(p^{\nu}) \geq 0$ for all primes in an interval, then its sum over that interval will be large. Via the large sieve, we can show that this is unlikely to happen. However we must take into account that if the $\lambda_f(p^{\nu})$ are all very small in absolute value in this interval, then the sum over $p$ of $\varepsilon_p \lambda_f(p)$ will be small in this interval regardless of its signs. The large sieve can show that this is also unlikely to happen.

\begin{lemma}{\cite[Theorem 1]{LauWu}}
\label{lem: large sieve modular forms}
Let $\nu \geq 1$ be a fixed integer and let $\{b_p\}_p$ be a sequence of real numbers indexed by prime numbers such that $|b_p| \leq B$ for some constant $B$ and for all primes $p$. Then,

\begin{equation*}
    \sum_{f \in H_k^*(N)} \Bigg| \sum_{\substack{P<p \leq Q \\ p\nmid N}} b_p \frac{\lambda_f(p^{\nu})}{p} \Bigg|^{2j} \ll k\varphi(N) \left( \frac{96B^2(\nu + 1)^2j}{P\log P} \right)^j + (kN)^{10/11}\left(\frac{10BQ^{\nu/10}}{\log P} \right)^{2j}
\end{equation*}

uniformly for the constant $B$, integers $j$, $k$, $P$, $Q$ and $N$ such that,

\begin{equation*}
    B>0, \quad j \geq 1, \quad 2|k, \quad 2 \leq P <Q \leq 2P, \quad N \geq 1 \   (squarefree).
\end{equation*}

The implied constant depends only on $\nu$.
\end{lemma}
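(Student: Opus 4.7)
The plan is to open the $2j$-th moment, convert products of Hecke eigenvalues into a linear combination of single $\lambda_f(m)$'s via Hecke multiplicativity, and then separate a combinatorial ``diagonal'' main term from a trace-formula ``off-diagonal'' error. Set $S_f := \sum_{P<p\leq Q,\, p\nmid N} b_p \lambda_f(p^\nu)/p$; since the $b_p$ are real, $|S_f|^{2j} = S_f^{2j}$, and expanding produces
\begin{equation*}
\sum_{f \in H_k^*(N)} S_f^{2j} = \sum_{(p_1,\ldots,p_{2j})} \frac{b_{p_1}\cdots b_{p_{2j}}}{p_1\cdots p_{2j}} \sum_{f \in H_k^*(N)} \prod_{i=1}^{2j} \lambda_f(p_i^\nu),
\end{equation*}
with each $p_i \in (P,Q]$ and coprime to $N$. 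Iterating the Hecke relation $\lambda_f(p^a)\lambda_f(p^b) = \sum_{c=0}^{\min(a,b)}\lambda_f(p^{a+b-2c})$ (valid since $p\nmid N$) lets me rewrite $\prod_i \lambda_f(p_i^\nu) = \sum_m \gamma_{\mathbf{p},m}\lambda_f(m)$, where $m$ runs over divisors of $(p_1\cdots p_{2j})^\nu$, the number of nonzero terms is at most $(\nu+1)^{2j}$, and $|\gamma_{\mathbf{p},m}|\leq 1$.

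The $m=1$ contribution forms the main term. Such a summand arises only when the tuple $(p_1,\ldots,p_{2j})$ admits a perfect pairing of its entries, since $\lambda_f(p^\nu)^2$ is the only product that contains $\lambda_f(1)$ (with coefficient one, from the $c=\nu$ branch). The number of pairings of $2j$ indices into $j$ unordered pairs is $(2j-1)!! \leq (2j)^j$; each pair $(p,p)$ contributes $b_p^2/p^2 \leq B^2/p^2$, and $\sum_{P<p\leq 2P} 1/p^2 \ll 1/(P\log P)$ by the prime number theorem. Combining these with a factor $(\nu+1)^2$ per pair absorbing the Hecke branch multiplicities, and with $\sum_f \lambda_f(1) = |H_k^*(N)| \ll k\varphi(N)$ from Proposition~\ref{prop:serre}, the diagonal contributes
\begin{equation*}
\ll k\varphi(N)\Bigl(\tfrac{96 B^2(\nu+1)^2 j}{P\log P}\Bigr)^j,
\end{equation*}
matching the first term of the stated bound.

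For the off-diagonal $m>1$ contribution, I would invoke a uniform Hecke-trace estimate of the shape $\bigl|\sum_{f\in H_k^*(N)}\lambda_f(m)\bigr| \ll (kN)^{10/11}m^{1/10}$ for $m\geq 2$. Such a bound follows from the Petersson (or Kuznetsov) trace formula applied to $H_k^*(N)$, after estimating the Kloosterman geometric side via Weil's bound and optimising the cutoff on the modulus of summation; the $10/11$ exponent is the standard payoff of that optimisation, and the $m^{1/10}$ factor controls the size of the archimedean transform. Using $|\gamma_{\mathbf{p},m}|\leq 1$, the trivial bound $m \leq (p_1\cdots p_{2j})^\nu \leq Q^{2\nu j}$, and $\sum_{P<p\leq Q}|b_p|/p \leq B\sum_p 1/p \ll B/\log P$ along each of the $2j$ prime coordinates, summing over all off-diagonal tuples yields an error of size $\ll (kN)^{10/11}\bigl(10B Q^{\nu/10}/\log P\bigr)^{2j}$.

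The main obstacle will be the combinatorial bookkeeping behind the second paragraph: accurately tracking how the $(\nu+1)$-fold Hecke expansion interacts with the $(2j-1)!!$ pairings, and squeezing out the specific explicit constants $96$ and $(\nu+1)^2$ that appear in the bound (rather than a looser $(cj)^j$ for some unspecified $c$). A secondary difficulty is securing the Hecke-trace estimate with exponent $10/11$ uniform in both the weight and the squarefree level aspects; Proposition~\ref{prop:serre} by itself yields $O_n(N^{1/2+\varepsilon})$ but does not see the $k$-saving, so a Kuznetsov-style geometric-side analysis is the natural route.
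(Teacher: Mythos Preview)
The paper does not prove this lemma at all: it is quoted verbatim as \cite[Theorem 1]{LauWu} and used as a black box in the proof of Proposition~\ref{prop: large sieve}. So there is no ``paper's own proof'' to compare against; your sketch is an attempt to reconstruct the Lau--Wu argument.

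Your overall strategy---expand the $2j$-th power, linearise via Hecke multiplicativity, and split into a diagonal $m=1$ piece and an off-diagonal piece controlled by a trace/large-sieve input---is indeed the route Lau and Wu take. Two points deserve tightening. First, your claim that ``$m=1$ arises only when the tuple admits a perfect pairing, since $\lambda_f(p^\nu)^2$ is the only product that contains $\lambda_f(1)$'' is false as stated: $\lambda_f(p^\nu)^{2r}$ also contains $\lambda_f(1)$ for every $r\ge 1$, and if $\nu$ is even then $\lambda_f(p^\nu)^{k}$ reaches $\lambda_f(1)$ for any $k$. What is true is that the dominant $m=1$ contribution comes from pairings, and tuples with higher prime multiplicities are controlled by the same combinatorics once you bound the multiplicity of $\lambda_f(1)$ in $\lambda_f(p^\nu)^{k}$ by $(\nu+1)^{k}$; this is where the $(\nu+1)^2$ per pair and the explicit constant $96$ actually come from in Lau--Wu. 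Second, the off-diagonal input you call for, $\bigl|\sum_{f}\lambda_f(m)\bigr|\ll (kN)^{10/11}m^{1/10}$, is not quite how Lau--Wu proceed: they feed the expanded moment directly into a large-sieve inequality for Hecke eigenvalues (of the Duke--Kowalski/Iwaniec type, uniform in $k$ and $N$) rather than a pointwise trace bound, and the exponent $10/11$ arises from balancing the two terms of that sieve after choosing the sieve length. Your proposed Petersson-side estimate would also work in principle, but getting the precise $10/11$ and $\nu/10$ exponents requires reproducing their optimisation.
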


For Corollary \ref{cor: nf large sieve} we may simply use Lemma \ref{lem: large sieve modular forms} as it is, but for Corollary \ref{cor: large sieve} we must include primes that divide $N$ to the sieved sum. Hence we need only prove Corollary \ref{cor: large sieve}, and Corollary \ref{cor: nf large sieve} will follow. With the notation as above we consider,

\begin{align}
   \sum_{f \in H_k^*(N)} \left| \sum_{P < p \leq Q} b_p \frac{\lambda_f(p^{\nu})}{p} \right|^{2j} &= \sum_{f \in H_k^*(N)} \Bigg| \sum_{\substack{P < p \leq Q \\ p\nmid N}} b_p \frac{\lambda_f(p^{\nu})}{p} +\sum_{\substack{P < p \leq Q \\ p| N}} b_p \frac{\lambda_f(p^{\nu})}{p} \Bigg|^{2j} \nonumber \\
   &\leq \sum_{f \in H_k^*(N)} 2j \cdot 2^{2j} \Bigg| \sum_{\substack{ P < p \leq Q \\ p\nmid N}}   b_p \frac{\lambda_f(p^{\nu})}{p} \Bigg|^{2j} + 2j\cdot 2^{2j}\Bigg|\sum_{\substack{P < p \leq Q \\ p | N}}  b_p \frac{\lambda_f(p^{\nu})}{p} \Bigg|^{2j}. \label{eq:p|N, p nmid N}
\end{align}

We may simplify the second inner sum, via Atkin-Lehner-Li theory \eqref{eq:ALL theory}, as a sum over reciprocals of prime powers, where the powers are strictly greater than 1. As such, this sum is clearly small, and indeed by the Prime Number Theorem one can see,

\begin{equation*}
    \Big| \sum_{\substack{P<p \leq Q \\ p|N}}  b_p \frac{\lambda_f(p^{\nu})}{p} \Big| \leq B \sum_{\substack{P<p\leq Q \\ p|N}} \frac{1}{p^{(\nu+2)/2}} =  \frac{B}{P^{\nu/2} \log P}(1+o(1)).
\end{equation*}

From here, with Lemma \ref{lem: large sieve modular forms}, the right hand side of \eqref{eq:p|N, p nmid N} has the following contribution, where we have trivially bounded the factor of $j$.

\begin{align}
    &\ll k\varphi(N) j\left( \frac{384B^2 (\nu+1)^2j}{P \log P}\right)^j + k\varphi(N)j\left(\frac{4B^2}{P^{\nu}\log^2 P} \right)^j + (kN)^{10/11}j\left( \frac{20BQ^{\nu/10}}{\log P} \right)^{2j} \nonumber \\
    &\ll  k\varphi(N) \left( \frac{768B^2 (\nu+1)^2j}{P \log P}\right)^j + (kN)^{10/11}\left( \frac{40BQ^{\nu/10}}{\log P} \right)^{2j}.
    \label{eq:p|N sieve}
\end{align}

Now we define the following sets. Let $\mathscr{P}$, $\delta$ be as in Proposition \ref{prop: large sieve}, and let

\begin{align*}
	\mathscr{E}_{\beta}(k, N;\mathscr{P}) = \mathscr{E}_{\beta} &:= \{ f \in H_k^*(N) : \   \varepsilon_p \lambda_f(p^{\nu}) > 0 \textrm{ for } p \in \mathscr{P} \cap (\beta,2\beta] \}, \hspace{1em} \beta \geq 2, \\
	\mathscr{E}_{\beta}^{\nu'}(k,N;\mathscr{P}) = \mathscr{E}_{\beta}^{\nu'} &:= \left\{ f \in H_k^*(N) : \  \Bigg| \sum_{\substack{\beta<p\leq 2\beta \\ p \in \mathscr{P}}} \frac{\lambda_f(p^{2\nu'})}{p}\Bigg| \geq \frac{\delta}{2\nu \log \beta} \right\}, \hspace{2em} (1 \leq \nu' \leq \nu).
\end{align*}

We wish to bound the size of the set $\mathscr{E}_{\beta}$ where the exact dependence of $\beta$ is yet to be determined. With the definition of $\mathscr{E}_{\beta}$ and Deligne's inequality we have,

\begin{align*}
    \sum_{f \in \mathscr{E}_{\beta}} \Bigg| \sum_{\substack{\beta<p\leq 2\beta \\ p \in \mathscr{P}}} \frac{\lambda_f(p^{\nu})^2}{p} \Bigg|^{2j} &\leq \sum_{f \in \mathscr{E}_{\beta}} \Bigg| \sum_{\substack{\beta<p\leq 2\beta \\ p \in \mathscr{P}}} (\nu+1)\varepsilon_p \frac{\lambda_f(p^{\nu})}{p} \Bigg|^{2j} \\
    &\leq  \sum_{f \in H_k^*(N)} \Bigg| \sum_{\substack{\beta<p\leq 2\beta \\ p \in \mathscr{P}}} (\nu+1)\varepsilon_p \frac{\lambda_f(p^{\nu})}{p} \Bigg|^{2j}.
\end{align*}

We apply the large sieve \eqref{eq:p|N sieve} with,

\begin{equation*}
    b_p = \begin{cases}
        (\nu+1)\varepsilon_p &\textrm{ for } p \in \mathscr{P}, \\
        0 &\textrm{ otherwise},
    \end{cases}
\end{equation*}

to find,

\begin{equation}
\label{eq: E_k^* large sieve}
	\sum_{f \in \mathscr{E}_{\beta}} \Bigg| \sum_{\substack{\beta<p\leq 2\beta \\ p \in \mathscr{P}}} \frac{\lambda_f(p^{\nu})^2}{p} \Bigg|^{2j} \ll k\varphi(N) \left( \frac{768(\nu+1)^4j}{\beta\log \beta}\right)^j + (kN)^{10/11}\left( \frac{40(\nu+1)(2\beta)^{\nu/10}}{\log \beta} \right)^{2j}.
\end{equation}

We then use the relation \eqref{eq:Heckemult}, the positive density of the set $\mathscr{P}$ and the definition of $\mathscr{E}_{\beta}^{\nu'}$ to trivially bound the left-hand side,

\begin{align}
\label{eq:prime logkn rough}
    &\geq \sum_{f \in \mathscr{E}_{\beta} \backslash (\cup_{\nu' =1}^{\nu} \mathscr{E}_{\beta}^{\nu'}) } \left( \sum_{\substack{\beta < p \leq 2\beta \\ p \in \mathscr{P}}} \frac{1}{p} - \sum_{1 \leq \nu' \leq \nu} \Bigg|\sum_{\substack{ \beta < p \leq 2\beta \\ p \in \mathscr{P}}} \frac{\lambda_f(p^{2\nu'})}{p} \Bigg| \right)^{2j} \\
	&\geq \sum_{f \in \mathscr{E}_{\beta}\backslash (\cup_{\nu' =1}^{\nu} \mathscr{E}_{\beta}^{\nu'})} \left( \frac{\delta}{2\log \beta} \right)^{2j}. \label{eq: E_beta lower bound}
\end{align}

In the case of Corollary \ref{cor: nf large sieve} we must restrict $\mathscr{P}$ to $\mathscr{P}_N$, the set of primes in $\mathscr{P}$ that do not divide $N$. In that case,

\begin{equation*}
    \sum_{f \in \mathscr{E}_{\beta}} \Bigg| \sum_{\substack{\beta < p \leq 2 \beta \\ p \in \mathscr{P}_N}} \frac{\lambda_f(p^{\nu})^2}{p} \Bigg|^{2j} \geq  \sum_{f \in \mathscr{E}_{\beta} \backslash (\cup_{\nu' =1}^{\nu} \mathscr{E}_{\beta}^{\nu'}) } \left( \sum_{\substack{\beta < p \leq 2\beta \\ p \in \mathscr{P}}} \frac{1}{p} - \sum_{\substack{\beta < p \leq 2\beta \\ p|N}} \frac{1}{p} - \sum_{1 \leq \nu' \leq \nu} \Bigg|\sum_{\substack{ \beta < p \leq 2\beta \\ p \in \mathscr{P}}} \frac{\lambda_f(p^{2\nu'})}{p} \Bigg| \right)^{2j}
\end{equation*}

And we have the same lower bound as in \eqref{eq: E_beta lower bound} if the set of primes between $\beta$ and $2\beta$ that divide $N$ is empty, thus we require $N$ to be prime, or have no prime factors smaller than $2\beta$. Then with \eqref{eq: E_k^* large sieve},  

\begin{equation}
 \label{eq: Ek*-Ek'}
    |\mathscr{E}_{\beta} \backslash \cup \mathscr{E}_{\beta}^{\nu'}| \ll k\varphi(N) \left( \frac{3072(\nu +1)^4 j \log \beta}{\beta \delta^2} \right)^j + (kN)^{10/11}\left( \frac{80(\nu+1)(2\beta)^{\nu/10}}{\delta } \ \right)^{2j}.
\end{equation}

This leaves us to bound $\mathscr{E}_{\beta}^{\nu'}$ for each $\nu'$. In the large sieve \eqref{eq:p|N sieve} we take,

\begin{equation*}
    \nu =2\nu', \quad B=1, \quad P=\beta, \quad Q=2\beta, \quad \textrm{ and } b_p = \begin{cases}
        1 &\textrm{ if } p \in \mathscr{P}, \\
        0 &\textrm{ otherwise}.
    \end{cases}
\end{equation*}

We then use the definition of $\mathscr{E}_{\beta}^{\nu'}$,

\begin{align*}
    |\mathscr{E}_{\beta}^{\nu'}| \left( \frac{\delta}{2\nu\log \beta} \right)^{2j} \leq \sum_{f \in \mathscr{E}_{\beta}^{\nu'}} \Bigg| \sum_{\substack{\beta<p\leq 2\beta \\ p \in \mathscr{P}}} &b_p \frac{\lambda_f(p^{2\nu'})}{p} \Bigg|^{2j} \leq  \sum_{f \in H_k^*(N)} \Bigg| \sum_{\substack{\beta<p\leq 2\beta \\ p \in \mathscr{P}}} b_p\frac{\lambda_f(p^{2\nu'})}{p} \Bigg|^{2j} \\
    &\ll k\varphi(N) \left( \frac{768(2\nu'+1)^2j}{\beta \log \beta} \right)^j + (kN)^{10/11} \left( \frac{40(2\beta)^{\nu'/5}}{\log \beta} \right)^{2j}.
\end{align*}

We can see that $\nu^2(2\nu'+1)^2 \leq 4(\nu+1)^4$ for $1\leq \nu' \leq \nu$. Thus we have,

\begin{equation*}
    |\mathscr{E}_{\beta}^{\nu'}| \ll k\varphi(N) \left( \frac{12288(\nu+1)^4 j \log \beta}{\delta^2 \beta} \right)^j + (kN)^{10/11} \left( \frac{80\nu(2\beta)^{\nu/5}}{\delta} \right)^{2j}.
\end{equation*}

Together with \eqref{eq: Ek*-Ek'}, we have,

\begin{equation}
\label{eq: E_k^*}
    |\mathscr{E}_{\beta}(k,N;\mathscr{P})| \ll k\varphi(N) \left( \frac{12288(\nu+1)^4 j\log \beta}{\delta^2\beta} \right)^j + (kN)^{10/11}\left( \frac{80(\nu+1)(2\beta)^{\nu/5}}{\delta} \right)^{2j}.
\end{equation}

It now remains to pick a suitable integer $j \geq 1$ and determine the dependence of $\beta$ on $k$ and $N$. Consider, 

\begin{equation*}
    j = \left\lfloor \delta^*\frac{\beta}{\log \beta} \right\rfloor,
\end{equation*}

where

\begin{equation*}
    \delta^* = \frac{\delta^2}{15000(\nu+1)^4}.
\end{equation*}

To ensure $j \geq 1$ we choose $\beta$ large enough such that $\beta/\log \beta \geq 1/\delta^*$, and hence we have some constant $C_0(\nu, \mathscr{P}) = C_0$ such that we require $C_0 \ll_{\nu, \mathscr{P}} \beta$. Our main term becomes, for some constant $c > 1$,

\begin{align*}
    \left( \frac{12288(\nu+1)^4 j\log \beta}{\delta^2\beta} \right)^j &\leq  \left( \frac{1}{c} \right)^{j}\\
    &\ll_{\nu, \mathscr{P}} \exp \left( -C_1 \frac{\beta}{\log \beta} \right).
\end{align*}

For large enough $\beta$, and where $C_1$ is a positive constant depending on $\nu$, $\mathscr{P}$. For the error term in \eqref{eq: E_k^*} we require $ \ll (kN)^{\alpha}$ for some $\alpha(\nu, \mathscr{P}) = \alpha$ sufficiently small and depending on $\nu$ and $\mathscr{P}$. That is,

\begin{equation*}
 j\log \beta \ll \alpha \log(kN).
\end{equation*}

Hence we have $\beta \ll_{\nu, \mathscr{P}} \log(kN)$ and large enough $k, N$.

\section{Proof of weighted large sieve and statistical results}
\label{sec:proof of large sieve}

Here we will prove Theorems \ref{thrm:weighted linnik sieve} and \ref{thrm: linnik nf}. We will see that we may view Theorem \ref{thrm: linnik nf} as more of a corollary of Theorem \ref{thrm:weighted linnik sieve}, except that we use it in the form of Corollary \ref{cor:weighted polynomial sieve}, stated below. Although seemingly backwards, we will begin by assuming Theorem \ref{thrm:weighted linnik sieve} and Corollary \ref{cor:weighted polynomial sieve} so that we may prove Theorem \ref{thrm: linnik nf}. This will allow us to easily introduce Corollary \ref{cor:weighted polynomial sieve}, which itself can be viewed as a large sieve for an approximation of sets. Then we will finish by proving Theorem \ref{thrm:weighted linnik sieve} and Corollary \ref{cor:weighted polynomial sieve}.

\subsection{Proof of Theorem \ref{thrm: linnik nf}}

Our goal is to sieve sets of the type,

\begin{equation*}
    S = \{ f \in H_k^*(N): \theta_f(p) \in I_p, \  p \leq (\log kN)^A, p \nmid N\},
\end{equation*}

for some intervals $I_p \subseteq [0, \pi]$ that represent $\lambda_f(p)\varepsilon_p > 0$, and some $A > 1$.  To use our sieve we approximate the characteristic function of $I_p$ with Chebyshev polynomials, since they form an orthonormal basis in $L^2([0,\pi],\mu_{ST})$. However, such an approximation would involve infinitely many terms, hence we consider a finite approximation. That is, let $Y_p$ be a degree $s$ Chebyshev polynomial, indexed by some prime $p$,

     \begin{equation}
     \label{eq:chebyshev polynomial}
        Y_p(\theta) = \sum_{i =0}^{s} \alpha_p(i) X_i(\theta),
     \end{equation}

    and where $\alpha_p(i)$ are real coefficients of $Y_p$. Recall $X_n$ is the $n^{\text{th}}$ Chebyshev function \eqref{eq:chebyshev}. Note that by orthonormality we have,

    \begin{equation*}
        \int_0^{\pi} Y_p(\theta) \text{ d}\mu_{ST} = \alpha_p(0),
    \end{equation*}

    hence when constructing our approximation to the characteristic function of $I_p$, we can consider functions where $Y_p(\theta_f(p))$ is close to the mean of $Y_p$. In other words,

    \begin{equation*}
        \theta_f(p) \in I_p \implies |Y_p(\theta_f(p)) -\alpha_p(0)| \leq \delta,
    \end{equation*}

    for some $\delta > 0$. On the other hand, we may equivalently consider functions that have $Y_p(\theta_f(p))$ far from its mean:

    \begin{equation*}
        \theta_f(p) \notin I_p \implies Y_p(\theta_f(p)) - \alpha_p(0) \geq \delta \hspace{0.5em} \textrm{OR} \hspace{0.5em} Y_p(\theta_f(p)) - \alpha_p(0) \leq -\delta,
    \end{equation*}

    for some $\delta > 0$. Then we may make an approximation on the size of $S$ that is somewhat easier to form by just considering one of the conditions that imply $Y(\theta_f(p))$ is far from its mean, notably when $Y_p(\theta_f(p)) \leq \alpha_p(0) - \delta$. In this vein we have,

    \begin{corollary}
    \label{cor:weighted polynomial sieve}
    Let $Y_p$ be a Chebyshev polynomial of degree $\leq s$, and indexed by primes $p \leq \beta$ that are coprime to $N$, for any $\beta > 1$. For some $0 < \alpha < 1$, let $M$ be such that $Nk^{\alpha} \geq 2\pi M n$ and $M \geq 1$, where $n = e^{k/(k-3)}$ and $k >2$ is even. For the set $\mathcal{L}(\beta)$:

    \begin{equation*}
        \mathcal{L}(\beta) := \{ f \in H_k^*(N): Y_p(\theta_f(p)) \leq \alpha_p(0) - \delta_p, \  p \leq \beta, \  p \nmid N\},
    \end{equation*}

    and for any $\delta_p > 0$, we have,

    \begin{equation*}
    \sum_{f \in \mathcal{L}(\beta)} \omega_f \ll \left(1 + \frac{M^s}{Nk^{\eta}} \right) H^{-1},
    \end{equation*}

    where,

    \begin{equation*}
   H = \sum_{\substack{m \leq M \\ m| P_N(\beta)}} \prod_{p | m} \frac{\delta_p^2}{\sum_{1 \leq i \leq s} \alpha_p^2(i)}, \hspace{1em} P_N(\beta) = \prod_{\substack{p \leq \beta \\ p \nmid N}} p, \hspace{1em} \eta = k(1-\alpha) - \frac{k^{2\alpha-1}}{2}.
    \end{equation*}

    and $\omega_f$ is the harmonic weight,
    
    \begin{equation*}
         \omega_f =  \frac{\Gamma(k-1)}{(4\pi)^{k-1} \langle f,f \rangle}.
    \end{equation*}

    with $\langle \cdot, \cdot \rangle$ the Petersson inner product. For the $k=2$ case an extra factor of $\log (M^s)$ must be applied to the second term, and we have that $n=1$.
    \end{corollary}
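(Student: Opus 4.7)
The plan is a standard Selberg-type duality: construct a linear form $L(f)$ that is bounded below by $H$ on $\mathcal{L}(\beta)$, square it to deduce $H^2\sum_{f \in \mathcal{L}(\beta)}\omega_f \leq \sum_{f \in H_k^*(N)}\omega_f L(f)^2$, and then estimate the right-hand side by the weighted large sieve of Theorem \ref{thrm:weighted linnik sieve}.

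Using $X_0 = 1$ together with the identity $X_i(\theta_f(p)) = \lambda_f(p^i)$, the condition defining $\mathcal{L}(\beta)$ rewrites as
\begin{equation*}
h_p(f) := \alpha_p(0) - Y_p(\theta_f(p)) = -\sum_{i=1}^s \alpha_p(i)\,\lambda_f(p^i) \geq \delta_p
\end{equation*}
for every $p \leq \beta$ with $p \nmid N$. Writing $S_p := \sum_{1 \leq i \leq s}\alpha_p(i)^2$, I set
\begin{equation*}
L(f) := \sum_{\substack{m \leq M \\ m \mid P_N(\beta) \\ m \text{ squarefree}}} \prod_{p \mid m} \frac{\delta_p}{S_p}\, h_p(f).
\end{equation*}
On $\mathcal{L}(\beta)$ each factor is nonnegative and at least $\delta_p^2/S_p$, so $L(f) \geq H$ there, and the squaring step above applies.

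Next I expand $L(f)$ as a linear form in Hecke eigenvalues. Because the primes $p\mid m$ are distinct and each is coprime to $N$, Hecke multiplicativity \eqref{eq:Heckemult} collapses to $\prod_{p\mid m}\lambda_f(p^{i_p}) = \lambda_f\!\bigl(\prod_{p\mid m} p^{i_p}\bigr)$, giving
\begin{equation*}
L(f) = \sum_{r} A_r\, \lambda_f(r), \qquad A_r = (-1)^{\omega(r)}\prod_{p\mid r}\frac{\delta_p\,\alpha_p(v_p(r))}{S_p},
\end{equation*}
where $r$ ranges over integers whose radical is squarefree, divides $P_N(\beta)$ and is at most $M$, and whose exponents $v_p(r)$ lie in $\{1,\dots,s\}$. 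In particular $(r,N)=1$ and $r \leq M^s$, so Theorem \ref{thrm:weighted linnik sieve}, applied with sieve length $M^s$, gives
\begin{equation*}
\sum_{f \in H_k^*(N)} \omega_f\, L(f)^2 \ll \Bigl(1 + \frac{M^s}{Nk^{\eta}}\Bigr)\|A\|^2.
\end{equation*}

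Finally, grouping by the radical $m$ of $r$ makes $\|A\|^2$ telescope:
\begin{equation*}
\|A\|^2 = \sum_m \prod_{p\mid m}\frac{\delta_p^2}{S_p^2} \prod_{p\mid m}\sum_{i=1}^s \alpha_p(i)^2 = \sum_m \prod_{p\mid m}\frac{\delta_p^2}{S_p} = H.
\end{equation*}
Combining this with $H^2\sum_{f \in \mathcal{L}(\beta)}\omega_f \leq \sum_f \omega_f L(f)^2$ and dividing through by $H^2$ yields the stated bound, and the $k=2$ case inherits the extra $\log(M^s)$ factor directly from Theorem \ref{thrm:weighted linnik sieve}. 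The main thing to pin down is the combinatorial bookkeeping that identifies the support of $L$ with integers $r \leq M^s$ coprime to $N$, since this is precisely what fixes the sieve-length parameter in the large sieve application and produces the factor $M^s$ in the final estimate.
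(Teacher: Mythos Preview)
Your argument is correct and is essentially the paper's own proof: the paper introduces free amplifier weights $\xi_p$, applies Theorem~\ref{thrm:weighted linnik sieve} to the resulting linear form, and then optimises to $\xi_p = \delta_p/S_p$, whereas you simply hard-wire this optimal choice into $L(f)$ from the outset. The expansion of $L(f)$ in Hecke eigenvalues, the identification of the sieve length as $M^s$, and the telescoping computation $\|A\|^2 = H$ match the paper line for line.
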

For appropriately chosen $Y_p$ and $\delta_p$ for all $p \leq (\log kN)^A$, we see that $|S| \leq |\mathcal{L}((\log kN)^A)|$. Take the following degree 2 Chebyshev polynomial,

    \begin{equation*}
        F(\theta) =  - \frac{3}{4}X_0(\theta) + \frac{1}{2}X_1(\theta) + \frac{1}{4}X_2(\theta).
    \end{equation*}

    With this choice of $F$ we have the following,

    \begin{equation*}
        \int_0^{\pi} F(\theta) \textrm{ d}\mu_{ST} = -\frac{3}{4}, \hspace{1em} F(\theta) \leq \textrm{sgn}(2\cos \theta), \  \theta \in [0, \pi],
    \end{equation*}

    where $\textrm{sgn}(x)$ is the sign function. See Figure \ref{fig:Y_p} for reference. Furthermore, let $\tilde{F}(\theta) = F(\pi - \theta)$, where we assume at $\theta =\pi/2$ we have equality ($\tilde{F}(\pi/2) = F(\pi/2)$). Then for any sign sequence $(\varepsilon_p)_{p \leq \beta}$ define our $Y_p$ as,

    \begin{equation*}
        Y_p = \begin{cases}
            F &\textrm{ if } \varepsilon_p = 1, \\
            \tilde{F} &\textrm{ if } \varepsilon_p = -1. \\
        \end{cases}
    \end{equation*}

    If $I_p$ is the subset that $\theta_f(p)$ lies in so that $\lambda_f(p)\varepsilon_p > 0$, we can see that we have $\theta_f(p) \notin I_p \implies Y_p(\theta_f(p)) \leq -1$, and given that $\alpha_p(0) = -3/4$ for both $F$ and $\tilde{F}$ we have $\delta_p = 1/4$ for all $p \leq (\log kN)^A$ (with $p \nmid N$). Our choice of $Y_p$ implies that the size of the set $\mathcal{L}((\log kN)^A)$ upper bounds the size of our initial set $S$.
    
    Now we apply Corollary \ref{cor:weighted polynomial sieve} with $M = N^{1/2}k^{\gamma}$, where $1/2 < \gamma < 1$ and we pick $\alpha$ such that $ \gamma < \alpha < 1$ so that $ M \leq N k^{\alpha}/2\pi n$. Since we have that $\delta_p = 1/4$, $|\alpha_p(1)|= 1/2$ and $|\alpha_p(2)| = 1/4$ for all primes $p \leq (\log kN)^A$ and both $F$ and $\tilde{F}$, then the $H$ term in Corollary \ref{cor:weighted polynomial sieve} is just a sum over $m$ of $5^{-\omega(m)}$  where $\omega(n)$ counts the number of distinct prime factors of $n$. So for large enough $k ,N$ we have,

\begin{equation}
\label{eq:weighted sieve application}
    \sum_{f \in \mathcal{L}} \omega_f \ll \left(1+ \frac{\log (M^2)}{k^{\eta -2\gamma}}\right) \left( \sum_{\substack{m \leq M \\ m|P_N((\log kN)^A)}}  5^{-\omega(m)} \right)^{-1},
\end{equation}

where we include the $\log (M^s)$ to incorporate the $k=2$ case. We can see that we are summing over the set of square-free integers $\leq M$ that are $(\log kN)^A$-smooth\footnote{An integer is $y$-smooth if all its prime factors are less than or equal to $y$.} and coprime to $N$. We have bounds on the size of sets of this type. We label this set $\Psi_N^*(M, (\log kN)^A)$, then from \cite[p. 10]{DukKow},

\begin{equation*}
    |\Psi_N^*(M, (\log kN)^A) | \gg M^{1-1/A - \varepsilon},
\end{equation*}

with the implied constant depending on $\varepsilon$. We also note that any $m \in \Psi_N^*(M, (\log kN)^A)$ is necessarily less than a product of consecutive primes less than $M$, so that we have,

\begin{equation*}
    \omega(m) \ll \frac{\log M}{\log \log M},
\end{equation*}

then we also have,

\begin{equation*}
    \sum_{\substack{m \leq M \\ m|P_N((\log kN)^A)}} \exp(-\omega(m) \log 5) \gg \exp \left( -\log 5 \frac{\log M}{\log \log M} \right) M^{1- 1/A - \varepsilon} \gg M^{1- 1/A - 2\varepsilon},
\end{equation*}

for large enough $M$. We remark that the above bound on $H$ is still valid for any choice of $\alpha_p(i)$ and $\delta_p$, and thus valid for any choice of $Y_p$. We then have,

\begin{equation*}
    \sum_{f \in \mathcal{L}} \omega_f \ll \left(1 + \frac{\log (M^2)}{k^{\eta- 2\gamma}}\right)M^{-1+1/A+2\varepsilon},
\end{equation*}

and using $M = N^{1/2}k^{\gamma}$, then for large enough $k, N$,

\begin{align}
  \sum_{f\in \mathcal{L}} \omega_f \ll  N^{-1/2+1/2A + \varepsilon'}  k^{-\gamma + \gamma/A + 2\gamma \varepsilon}. \label{eq:gamma}
\end{align}

To remove the harmonic weight we have that the Petersson inner product can be related to the symmetric square $L$-function at $s=1$ (as in \cite[p. 138]{IwaKow}), and we have from \cite{HofLoc} and \cite{GolHofLie} that,

\begin{equation*}
    \frac{1}{\log kN} \ll L(\textrm{Sym}^2f, 1) \ll \log kN,
\end{equation*}

hence we have,

\begin{equation*}
    \frac{\Gamma(k)}{(4\pi)^k \log kN} N \log N \ll \langle f, f \rangle \ll \frac{\Gamma(k)}{(4 \pi)^k} N \log N \log kN,
\end{equation*}

so we may say,

\begin{equation}
\label{eq:omega_f}
    \omega_f \gg \frac{1}{kN \log N \log kN}.
\end{equation}

Combining \eqref{eq:gamma} and \eqref{eq:omega_f} we have, for any $\varepsilon_1 > 0$ and large enough $k$, $N$,

\begin{align}
   |S| \leq |\mathcal{L}((\log kN)^A)| &\ll (kN \log N \log kN) N^{-1/2+1/2A + \varepsilon'}  k^{-\gamma + \gamma/A + 2\gamma \varepsilon} \nonumber \\
   &\ll   N^{1/2 + 1/2A + \varepsilon_1}k^{1-\gamma + \gamma/A + 2\gamma \varepsilon'+\varepsilon''}, \label{eq:sieved S}
\end{align}

and we see that taking $\gamma$ close to $1$ optimises our bound. Then for any $\varepsilon_2 > 0$ we have,

\begin{equation*}
    |S| \ll_{\varepsilon_1, \varepsilon_2} N^{1/2+1/2A+\varepsilon_1}k^{1/A + \varepsilon_2},
\end{equation*}

for large enough $k$ and $N$, where the implied constant depends only on $\varepsilon_1$ and $\varepsilon_2$, which is exactly the statement in Theorem \ref{thrm: linnik nf}.

\begin{figure}[h]
\centering
\captionsetup{justification=centering}
\begin{tikzpicture}
    \begin{axis}[my style, xtick={(1/2)*pi, pi},
        xticklabel style={font=\footnotesize,fill=white},
        xticklabels={$\pi/2$,$\pi$}]
        \addplot[domain=0:pi, samples=100]{-1+ cos(deg(x)) + cos(deg(x))*cos(deg(x))}node[pos=0.9, below right]{$F$};
        \addplot[domain=0:pi, samples=100, blue]{-1-cos(deg(x)) + cos(deg(x))*cos(deg(x))}node[pos=0.9, above left, color=blue]{$\widetilde{F}$};
        \addplot[domain = pi/2:pi, samples = 100, red]{-1};
        \addplot[domain = 0:pi/2, samples = 100, red]{1}node[below, color=red]{sgn($2 \cos \theta$)};
    \end{axis}
\end{tikzpicture}
    \caption{$F  = - \frac{3}{4}X_0 + \frac{1}{2}X_1 + \frac{1}{4}X_2$,\\ $\widetilde{F}=-\frac{3}{4}X_0 + \frac{1}{2}X_1 - \frac{1}{4}X_2$, \\
    compared with $\textrm{sgn}(2 \cos \theta)$ in red.}
\label{fig:Y_p}
\end{figure}
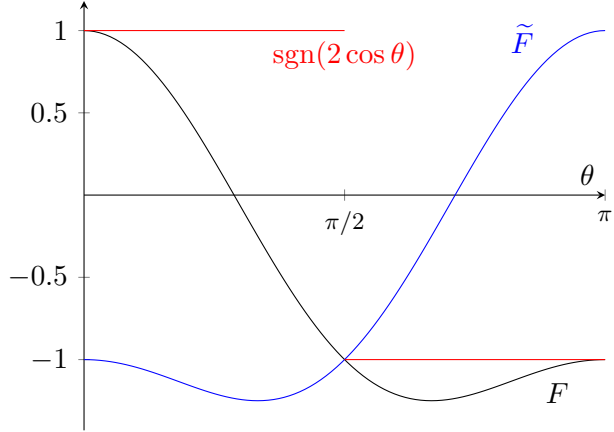

\subsection{Proof of Theorem \ref{thrm:weighted linnik sieve}}
\label{sec:proof linnik sieve}

This weighted sieve is derived from the Petersson trace formula,

\begin{equation*}
    \frac{\Gamma(k-1)}{(4 \pi \sqrt{mn})^{k-1}} \sum_{f \in \mathcal{F}} a_f(n) \overline{a_f(m)} = \delta(m,n) + 2 \pi i^{-k} \sum_{\substack{c>0 \\ c \equiv 0 \text{ (mod } N)}} c^{-1} S(m,n;c)J_{k-1} \left( \frac{4\pi \sqrt{mn}}{c} \right),
\end{equation*}

for $\mathcal{F}$ an orthonormal basis to $S_k(N)$. We have $S(m,n;c)$ is the Kloosterman sum, and $J_{k-1}(2x)$ is the Bessel function of the first kind. The $a_f(n)$ refer to the orthonormalised Fourier coefficients of the form $f \in \mathcal{F}$. As such, let's consider the following weighted Fourier coefficients,

\begin{equation}
\label{eq:alg norm}
    \psi_f(n) = \left( \frac{\Gamma(k-1)}{(4 \pi n)^{k-1}} \right)^{1/2} a_f(n).
\end{equation}

Then complex numbers $a_m$ we look to find, via the Petersson trace formula, $\Delta(N,k,M)$ so that,

\begin{equation}
\label{eq:sieve}
    \sum_{f \in \mathcal{F}} \left| \sum_{m \leq M} a_m \psi_f(m) \right|^2 \ll \Delta(N,k,M) \|a\|^2,
\end{equation}

where we may achieve sufficient savings in $k$ and $N$. From the Petersson trace formula we can see that the left-hand side of \eqref{eq:sieve} is equal to,

\begin{equation}
\label{eq:first step}
    \| a\|^2 + 2 \pi i^{-k} \sum_{\substack{c>0 \\ c \equiv 0 \text{ (mod } N)}} c^{-1} \mathop{\sum\sum}_{m,n \leq M}\bar{a}_ma_n S(m,n;c) J_{k-1}\left( \frac{4 \pi \sqrt{mn}}{c} \right).
\end{equation}

By Cauchy-Schwarz on the Kloosterman sum, followed by an application of the classical large sieve we have that the Kloosterman sum alone may be bounded as follows.

\begin{align*}
    \left| \mathop{\sum\strut^*}_{x \text{ (mod } c)}\mathop{\sum\sum}_{m,n \leq M} e \left( \frac{xm + \bar{x}n}{c} \right) \bar{a}_ma_n \right|^2 &\leq \sum_x \left| \sum_{m \leq M} e \left( \frac{xm}{c} \right) \bar{a}_m \right|^2 \cdot \sum_{x} \left| \sum_{n \leq M} e \left(\frac{\bar{x}n}{c} \right) a_n \right|^2\\
    &\leq \left( (c+M) \| a\|^2 \right)^2.
\end{align*}

Alternatively, we may apply Weil's bound on the Kloosterman sum:

\begin{equation*}
    |S(m,n;c)| \leq (m,n,c)^{1/2}c^{1/2} \tau(c),
\end{equation*}

so that,

\begin{align*}
    \left| \mathop{\sum\sum}_{m,n \leq M} \bar{a}_m a_n S(m,n;c) \right| &\leq \mathop{\sum\sum}_{m,n \leq M} |\bar{a}_m a_n| (m,n,c)^{1/2} c^{1/2} \tau(c) = \tau(c) c^{1/2}\sum_{d|c} d^{1/2} \mathop{\sum\sum}_{m,n \leq M/d} |\bar{a}_{md} a_{nd}|\\
    &\leq \tau(c) c^{1/2} \sum_{d|c} d^{1/2} \left( \sum_{m \leq M/d} |b_{md}| \right)^2  \\
    &\leq \tau(c) \sum_{d|c} d^{1/2} \left( \sum_{m \leq M/d} 1 \right)\left( \sum_{m \leq M/d} |b_{md}|^2 \right) \\
    &\leq \tau(c)^2 c^{1/2} M \|a\|^2.
\end{align*}

Then depending on the size of $c$ we may achieve more savings through Weil's bound. Recall in the inner sum of \eqref{eq:first step} we are twisting by the Bessel function, and we denote this part of the sum as $L(c)$:

\begin{equation}
    L(c) = \mathop{\sum\sum}_{m,n\leq M} \bar{a}_ma_n S(m,n;c) J_{k-1} \left( \frac{4 \pi \sqrt{mn}}{c} \right),
    \label{eq:L(c)}
\end{equation}

where the Bessel function has the following power series expansion,

\begin{equation*}
    J_{k-1}(2x) = \sum_{l \geq 0} \frac{ (-1)^l x^{k-1+2l}}{l!\Gamma(k+l)},
\end{equation*}

then using the bounds we have on the Kloosterman sum,

\begin{align}
    \left| L(c) \right|^2 \leq \sum_{l \geq 0} \left| \frac{(-1)^l (2 \pi)^{k-1+2l}}{l!\Gamma(k+l)c^{k-1+2l}} \right|^2 \left| \mathop{\sum\sum}_{m,n \leq M}\bar{a}_m a_n S(m,n;c) (mn)^{\frac{k-1+2l}{2}}\right|^2 \nonumber \\ 
    \leq \sum_{l \geq 0} \left| \frac{(-1)^l}{l! \Gamma(k+l)} \cdot \left( \frac{2\pi M}{c} \right)^{k-1+2l} \right|^2 \min \{(c+M)^2, c\tau(c)^4M^2\} \|a\|^4.
    \label{eq:Kloosterman and bessel}
\end{align}

Denote the final sum as $\mathscr{J}_{k-1}(x)$ so that,

\begin{equation*}
    \mathscr{J}_{k-1}(x) = \sum_{l \geq 0} \left| \frac{(-1)^l x^{k-1+2l}}{l!\Gamma(k+l)} \right|^2 = \sum_{l \geq 0} \left( \frac{x^{k-1+2l}}{l! \Gamma(k+l)} \right)^2.
\end{equation*}

To ensure some saving in $k$ we impose the condition $|x| = | 2\pi M/c | \leq k^{\alpha}/n$ for some $\alpha, n > 0$. This is satisfied for any $c > 0$ with $c \equiv 0$ (mod $N$) if $Nk^{\alpha} \geq 2\pi Mn$. If we first have $k \geq 4$ (and even) we may say,

\begin{align*}
    \mathscr{J}_{k-1}(x) &\leq x^4 \left( \frac{k^{\alpha(k-3)}}{n^{k-3}\Gamma(k)}\right)^2 + x^4 \sum_{l \geq 1} \left( \frac{k^{\alpha(k-3+2l)}}{l! \Gamma(k+l)} \right)^2  \\
    &\leq x^4 \left(\frac{k^{\alpha(k-3)}e^k}{n^{k-3}k^{k-1/2}} \right)^2+ x^4\sum_{l \geq 1} \left( \frac{k^{\alpha(k-3+2l)}e^{l+k}}{n^{k-3+2l}l^{l+1/2}(k+l)^{k+l-1/2}} \right)^2,
\end{align*}

via Stirling's approximation. With $n = e^{k/(k-3)}$ we can see that we have,

\begin{align*}
    \mathscr{J}_{k-1}(x) &\leq x^4(k^{k(\alpha-1)-3\alpha+1/2})^2\left( 1 + \sum_{l \geq 1} \left( \frac{k^{k+2\alpha l-1/2}}{ l^{l+1/2}(k+l)^{k+l-1/2}} \right)^2 \right) \\
    &\leq x^4(k^{k(\alpha-1)-3\alpha+1/2})^2 \left(1 + \sum_{l \geq 1} \left( \frac{k^{2\alpha l}}{{l^{l+1/2}(k+l)^{l}}} \right)^2  \right).
\end{align*}

To achieve any savings in $k$, we can see from the factor of $k^{k(\alpha-1)-3\alpha + 1/2}$ that we require $\alpha \leq 1$. Further, we can see that the $l$-sum clearly converges to a constant for all $\alpha \leq 1/2$. So let $1/2 < \alpha \leq 1$. First, we can see that for $l > k^{2\alpha -1}$ the sum is suitably small.

\begin{equation*}
    \sum_{l >k^{2\alpha -1}} \frac{k^{4\alpha l}}{l^{2l+1}(k+l)^{2l}} < \sum_{l > k^{2\alpha -1}} \frac{k^{4\alpha l}}{(k^{2\alpha -1})^{2l + 1} (k(1+k^{2\alpha-2}))^{2l}} = \frac{1}{k^{2\alpha -1}} \sum_{l >k^{2\alpha -1}} \frac{1}{(1+k^{2\alpha -2})^{2l}}.
\end{equation*}

On the other hand, if we let $l = xk^{2\alpha -1}$ where $x = i/k^{2\alpha- 1}$ for $i = 1, ..., \lfloor k^{2 \alpha -1} \rfloor$,

\begin{equation*}
    \sum_{l \leq k^{2\alpha-1}} \frac{k^{4\alpha l}}{l^{2l+1}(k+l)^{2l}} = \sum_{i=1}^{\lfloor k^{2\alpha -1} \rfloor} \frac{k^{4\alpha xk^{2\alpha-1}}}{(xk^{2\alpha-1})^{2xk^{2\alpha-1}+1}(k+xk^{2\alpha-1})^{2xk^{2\alpha-1}}}.
\end{equation*}

Taking logarithms we see for each term, and noting $1/k^{2\alpha-1} \leq x \leq 1$,

\begin{multline*}
        \log \left( \frac{k^{4\alpha xk^{2\alpha -1}}}{(xk^{2\alpha-1})^{2xk^{2\alpha-1}+1}(k+xk^{2\alpha-1})^{2xk^{2\alpha-1}}} \right)= \\
        -(2\alpha -1 ) \log k - \log x -2xk^{2\alpha -1}\log x  - 2xk^{2\alpha -1}\log(1+xk^{2\alpha -2}) \\
        \leq -2xk^{2\alpha-1}\log x.
\end{multline*}

By considering the derivatives, we can see $-2xk^{2\alpha-1}\log x$ is maximised at $x = e^{-1}$, hence we have,

\begin{equation*}
    \sum_{l \leq k^{2\alpha -1}} \frac{k^{4\alpha l}}{l^{2l+1}(k+l)^{2l}} \leq k^{2\alpha -1} \exp (k^{2\alpha -1}).
\end{equation*}

Then for all $1/2 < \alpha \leq 1$ with $k \geq 4$ we have,

\begin{equation}
    \mathscr{J}_{k-1}(x) \ll x^4 \left(k^{k(\alpha-1) -2\alpha} \exp \left(\frac{k^{2\alpha -1}}{2}\right)\right)^2 \leq x^4 (k^{k(\alpha-1)-2\alpha + \frac{k^{2\alpha-1}}{2}})^2.
    \label{eq:curly J bound}
\end{equation}

On the other hand, for $0 < \alpha \leq 1/2$ we have a slightly different bound. For our purposes we require $\alpha > 1/2$, hence we omit this. See Remark \ref{rem: alpha} for more detail.

Writing $\eta_0 = k(\alpha - 1) -2\alpha +k^{2\alpha-1}/2$ we use this bound on $\mathscr{J}_{k-1}(x)$ in \eqref{eq:Kloosterman and bessel}, and we note that 
the bound on the Kloosterman sum obtained through the classical large sieve is sufficient here. Recall $x= 2\pi M/c$, then $L(c)$ in \eqref{eq:L(c)} may be bounded by,

\begin{equation*}
    L(c) \leq (c+M) \left( \frac{2\pi M}{c} \right)^2 k^{\eta_0} \|a\|^2.
\end{equation*}

We use this in \eqref{eq:first step} to determine $\Delta(N,k,M)$ for $Nk^{\alpha} \geq 2\pi Mn$ with $k \geq 4$,

\begin{align}
    \Delta(N,k,M) \| a \|^2 &= \|a\|^2 + 2\pi k^{\eta_0}\sum_{\substack{c>0 \\ c \equiv 0 \text{ (mod }N)}} c^{-1} \left( \frac{2\pi M}{c} \right)^2 (c+M)\|a\|^2 \nonumber\\
    &= \|a\|^2 + \frac{ 2 \pi k^{\eta_0} \|a\|^2}{N} \sum_{d \geq 1} \frac{(2\pi M)^2}{d^2N} + \frac{(2 \pi)^2 M^3}{d^3N^2} \nonumber\\
    &\leq \|a\|^2 + \frac{2 \pi k^{\eta_0} \|a\|^2}{N} \sum_{d\geq 1} \frac{2\pi M k^{\alpha}}{n d^2} + \frac{Mk^{2\alpha}}{n^2 d^3} \nonumber\\
    &\ll \|a\|^2 \left(1 + \frac{M}{Nk^{k(1-\alpha) - k^{2\alpha - 1}/2}} \right).\label{eq:Delta}
\end{align}

In the case $k=2$ we may not use the same bound \eqref{eq:curly J bound} on $\mathscr{J}_1(x)$. With $x < 2^{\alpha}$ we may only say,

\begin{align*}
    \mathscr{J}_1(x) = x^2\left( 1+ \sum_{l \geq 1} \left( \frac{x^{2l}}{l!(l+1)!} \right)^2 \right) \ll x^2.
\end{align*}

However we have that for large $c$, Weil's bound on the Kloosterman sum gives us an improvement, thus the sum over $c$ in \eqref{eq:first step} can be bounded by,

\begin{equation*}
    \smashoperator{\sum_{\substack{c>0 \\ c\equiv 0 (\textrm{ mod }N)}}} \frac{2\pi M}{c^2} \min \{ c+M, c^{1/2} \tau(c)^2 M \} \| a\| ^2 \leq \smashoperator{\sum_{\substack{0 < c < M^2 \\ c\equiv 0 \textrm{ (mod } N)}}} \frac{2\pi M}{c^2} (c+M)\|a\|^2 +\smashoperator{\sum_{\substack{c > M^2 \\ c\equiv 0  \textrm{ (mod } N)}}} \frac{2\pi M}{c^{3/2}} \tau(c)M \|a\|^2.
\end{equation*}

We see that an extra factor of $\log M$ occurs in the first sum,

\begin{equation*}
  \smashoperator{\sum_{\substack{0 < c < M^2 \\ c\equiv 0 \textrm{ (mod } N)}}} \frac{2\pi M}{c^2} (c+M) \leq \frac{2\pi M}{N} \sum_{d< M^2/N} \frac{1}{d} + \frac{2^{\alpha-1}}{d^2 \pi} \ll \frac{M\log M}{N},
\end{equation*}

and using the well known estimate $\tau(n) \sim n^{\varepsilon}$ we see that the tail sum is appropriately small,

\begin{equation*}
    \smashoperator{\sum_{\substack{c > M^2 \\ c\equiv 0  \textrm{ (mod } N)}}} \frac{2\pi M}{c^{3/2}} \tau(c)M \leq \frac{2\pi M^2}{N^{3/2-\varepsilon}} \sum_{d >M^2/N} \frac{1}{d^{3/2-\varepsilon}} < \frac{2\pi M^2}{N^{3/2-\varepsilon}} \int_{M^2/N}^{\infty} t^{-3/2+\varepsilon} \textrm{ d}t \ll \frac{M}{N}.
\end{equation*}

Thus for $k \geq 4$ and even, we have the general large sieve over some orthnoromal basis $\mathcal{F}$:

\begin{equation}
    \sum_{f \in \mathcal{F}} \left| \sum_{m \leq M} a_m \psi_f(m) \right|^2 \ll \left( 1 + \frac{M}{Nk^{\eta}} \right)^2 \| a \|^2,
\end{equation}

provided $Nk^{\alpha} \geq 2\pi Mn$, and where $\eta = k(1-\alpha) - k^{2\alpha -1}/2$. And in the $k=2$ case we have an extra factor of $\log M$ in the second term. Using this we may easily infer a statement about Hecke eigenvalues, $\lambda_f(m)$, as we have,

\begin{equation*}
   \left( \frac{\Gamma(k-1)}{(4 \pi m)^{k-1}} \right)^{-1/2} \psi_f(m) = a_f(m) = m^{\frac{k-1}{2}}\lambda_f(m).
\end{equation*}

Then with some orthonormal basis $\mathcal{B}$ we have,

\begin{equation*}
    \mathcal{B} = \left\{ \frac{f}{\langle f,f \rangle^{1/2}} : f \in S_k(N) \right\} \supseteq \left\{ \frac{f}{\langle f,f \rangle^{1/2}} : f \in H_k^*(N) \right\}. 
\end{equation*}

Hence for complex numbers $a_m$ that are 0 if $(m,N) \neq 1$,

\begin{equation*}
    \sum_{f \in H_k^*(N)} \frac{\Gamma(k-1)}{(4 \pi)^{k-1} \langle f,f \rangle} \left| \sum_{\substack{m \leq M \\ m \nmid N}} a_m \lambda_f(m) \right|^2 \ll \left(1 + \frac{M}{Nk^{\eta}} \right) \| a\|^2,
\end{equation*}

as required.

\begin{remark}
\label{rem: alpha}
We may remove the restriction on the length of the sieved sum to give a slightly less strong sieve inequality, see Theorem \ref{thrm:complete linnik sieve}. We note that at $\alpha = 1$ we actually have a loss in $k$ thus in Theorem \ref{thrm:weighted linnik sieve} we restrict to $1/2 < \alpha <1$. We note that for $\alpha \leq 3/4$ we achieve savings in $k$ for all $k > 2$, while for $3/4 < \alpha <1$ we only achieve a saving for large enough $k$. On the other hand, if we restrict to $0 < \alpha \leq 1/2$ we may very marginally improve the bound in $k$ through a different bound on $\mathscr{J}_{k-1}(x)$, but this is omitted here as it does not change the overall bound when the condition $Nk^{\alpha} \geq 2\pi M n$ is removed.
\end{remark}

\subsection{Proof of Corollary \ref{cor:weighted polynomial sieve}}

To show this corollary we will make use of an amplification trick. This is a well-known trick in analytic number theory and has been used in many areas, most relevantly in improving the classical large sieve (see \cite[Section 7]{Mon}). The idea is to apply an amplifier to our sieved set in the hopes that the new amplified object will produce a stronger bound when applying our large sieve Theorem \ref{thrm:weighted linnik sieve}. For any $f \in \mathcal{L}$ we have,

\begin{align}
        Y(\theta_f(p)) &\leq \alpha_p(0) - \delta_p \nonumber \\
        \implies \delta_p &\leq - \sum_{i=1}^{s} \alpha_p(i) X_i(\theta_f(p)). \label{eq:delta}
\end{align}

Recall we have the property of Hecke multiplicity which can be seen in the Chebyshev polynomials \eqref{eq:hecke chebyshev mult}, so that we may rewrite \eqref{eq:delta} in terms of Hecke eigenvalues. Consider taking a product over prime divisors of some integer $m$, where $m$ is square-free, $\beta$-smooth and co-prime to $N$. Then any such $m$ is simply a divisor of $P_N(\beta)$. That is, $m = p_1  p_2 \dots  p_l$ (for example) where for each $p_i$ we have $p_i \leq \beta$, $p_i \nmid N$ and $p_i \neq p_j$ for all $i \neq j$. Expanding the sum and product, and applying Chebyshev multiplicativity, we have,

\begin{equation}
\label{eq:Y_p hecke eigenvalues}
    \prod_{p | m} \sum_{i=1}^s -\alpha_p(i) X_i(\theta_f(p)) = \sum_{\substack{d|m^s  \\m|d}} \left( \prod_{p|d} -\alpha_p(v_p(d)) \right)\lambda_f(d),
\end{equation}

where $v_p(d)$ is the $p$-adic valuation of $d$. The sum over divisors $d$ of $m^s$ such that $m$ still divides $d$ are integers of the type $\prod_{p | m} p^j$ such that $j \neq 0$ (and clearly $j \leq s$). Consider now taking a sum over all such $m$ subject to $m \leq M$ for some $M > 1$ and $m | P_N(\beta)$, then from \eqref{eq:delta} and \eqref{eq:Y_p hecke eigenvalues} we see,

\begin{equation*}
    \frac{\sum_{\substack{m \leq M \\ m | P_N(\beta)}}\sum_{\substack{d |m^s \\ m|d}} \left( \prod_{p|d} - \alpha_p(v_p(d)) \right) \lambda_f(d)}{ \sum_{\substack{m \leq M \\ m | P_N(\beta)}}\prod_{p|m} \delta_p} \geq 1.
\end{equation*}
    
By construction of $P_N(\beta)$, the sum over $m$ is over square-free integers. Hence, after squaring, we may say,

\begin{equation*}
    \sum_{f \in \mathcal{L}(\beta)} \omega_f \leq \sum_{f \in H_k^*(N)}  \omega_f \frac{\left| \sum_{\substack{m \leq M \\ m | P_N(\beta)}}\sum_{\substack{d |m^s \\ m|d}} \left( \prod_{p|d}  (- \alpha_p(v_p(d))) \right)\lambda_f(d)\right|^2}{\left|\sum_{\substack{m \leq M \\ m | P_N(\beta)}}\prod_{p|m} \delta_p \right|^2}.
\end{equation*}

At this point we introduce our amplifiers to minimize the loss in this bound. Let $\xi_p$ be arbitrary positive, real numbers, so that we still have,

\begin{equation*}
    \frac{\sum_{\substack{m \leq M \\ m | P_N(\beta)}} \sum_{\substack{d |m^s \\ m|d}} \prod_{p|d} (  - \alpha_p(v_p(d)) )\xi_p\lambda_f(d)  }{\sum_{\substack{m \leq M \\ m | P_N(\beta)}}\prod_{p|m}\xi_p \delta_p} \geq 1.
\end{equation*}

Note in the numerator we may move the product of $\xi_p$ over primes that divide $m$ into the product over primes that divide $d$ as by construction these primes are the same. Then we apply Theorem \ref{thrm:weighted linnik sieve} with,

\begin{equation}
    a_d = \begin{cases}
       \prod_{p|d}   (-\alpha_p(v_p(d))) \xi_p &\text{if } \exists \  m\leq M, m|P_N(\beta) \text{ s.t. } d|m^s \text{ and } m|d \\
        0 &\text{otherwise.}
    \end{cases}
    \label{eq:a_d}
\end{equation}

For each $d$ there is only one such $m$ that satisfies the above conditions, so we may consider the double sum over $m$ and $d$ as one sum over $d$, subject to the conditions in \eqref{eq:a_d}. Then we see that the largest our index $d$ can be is $M^s$, so by Theorem \ref{thrm:weighted linnik sieve} on the numerator alone we have,

\begin{multline*}
   \frac{\sum_{f \in H_k^*(N)}  \omega_f\left| \sum_{\substack{m \leq M \\ m | P_N(\beta)}}\sum_{\substack{d |m^s \\ m|d}} \left( \prod_{p|d}  (- \alpha_p(v_p(d))) \xi_p\right)\lambda_f(d)\right|^2}{\left|\sum_{\substack{m \leq M \\ m | P_N(\beta)}}\prod_{p|m} \delta_p \xi_p\right|^2} \ll \\
    \left( 1 + \frac{M^s}{Nk^{\eta}} \right) \frac{\sum_{\substack{m \leq M \\ m| P_N(\beta)}} \sum_{\substack{d|m^s \\ m|d}} \left| \prod_{p|d} \alpha_p(v_p(d)) \xi_p \right|^2}{\left|\sum_{\substack{m \leq M \\ m | P_N(\beta)}}\prod_{p|m} \delta_p \xi_p\right|^2}
\end{multline*}

We find we achieve the most cancellation if we set $\xi_p = \frac{\delta_p}{\sum_{1 \leq i \leq s} \alpha_p(i)^2}$. Then in the numerator we have via \eqref{eq:Y_p hecke eigenvalues},

\begin{multline*}
  \sum_{\substack{m \leq M \\ m|P_N(\beta)}} \sum_{\substack{d|m^s \\ m|d}} \prod_{p|d} \alpha_p(v_p(d))^2 \frac{\delta_p^2}{\left( \sum_{1 \leq i \leq s} \alpha_p(i)^2 \right)^2} = \\
  \sum_{\substack{m\leq M \\ m|P_N(\beta)}} \prod_{p|m} \sum_{1 \leq i\leq s} \alpha_p(i)^2 \frac{\delta_p^2}{\left(\sum_{1 \leq i \leq s} \alpha_p(i)^2 \right)^2} = \sum_{\substack{ m \leq M \\ m|P_N(\beta)}}\prod_{p |m} \frac{\delta_p^2}{\sum_{1 \leq i \leq s} \alpha_p(i)^2} .
\end{multline*}

While in the denominator we have,

\begin{equation*}
    \left| \sum_{\substack{m \leq M \\ m | P_N(\beta)}}\prod_{p|m} \delta_p \xi_p\right|^2 = \left( \sum_{\substack{m \leq M \\ m | P_N(\beta)}} \prod_{p|m} \frac{\delta_p^2}{\sum_{1 \leq i \leq s} \alpha_p(i)^2} \right)^2,
\end{equation*}

giving the desired result,

\begin{equation*}
    \sum_{f \in \mathcal{L}(\beta)} \omega_f \ll \left( 1 + \frac{M^s}{Nk^{\eta}} \right) \left( \sum_{\substack{ m \leq M \\ m|P_N(\beta)}} \prod_{p|m} \frac{\delta_p^2}{\sum_{1 \leq i \leq s} \alpha_p(i)^2} \right)^{-1}.
\end{equation*}

Clearly we see in the $k=2$ case the only change is the additional factor of $\log (M^s)$ that occurs when applying Theorem \ref{thrm:weighted linnik sieve}.

\appendix

\section{A complete sieve}

We complete Theorem \ref{thrm:weighted linnik sieve} by removing the requirement $Nk^{\alpha} \geq 2\pi Mn$. We may remove this condition via dimension analysis, however this reduces the overall bound.

\begin{theorem}
\label{thrm:complete linnik sieve}
    Let $\mathcal{F}$ be an orthonormal basis of $S_k(N)$. Let $a_m$ be complex numbers, with $M > 1$ and $\psi_f(m)$ as in \eqref{eq:alg norm}. We have,

    \begin{equation*}
        \sum_{f \in \mathcal{F}} \left| \sum_{m \leq M} a_m \psi_f(m) \right|^2 \ll_{\varepsilon} \left( 1 + \frac{M}{Nk^{1-\varepsilon}} \right) \|a\|^2.
    \end{equation*}

    for any $0 < \varepsilon < 1$ where the implied constant depends only on $\varepsilon$. For $k = 2$ we instead have,
    
    \begin{equation*}
        \sum_{f \in \mathcal{F}} \left| \sum_{m \leq M} a_m \psi_f(m) \right|^2 \ll \left( 1 + \frac{M\log M}{N} \right) \|a\|^2.
    \end{equation*}
    
\end{theorem}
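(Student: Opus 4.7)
The plan is to start as in the proof of Theorem \ref{thrm:weighted linnik sieve}, via the Petersson trace formula, reducing to an estimate of
\begin{equation*}
\sum_{f \in \mathcal{F}} \Big| \sum_{m \leq M} a_m \psi_f(m) \Big|^2 = \|a\|^2 + 2\pi i^{-k} \sum_{\substack{c > 0 \\ N \mid c}} \frac{L(c)}{c},
\end{equation*}
with $L(c)$ as in \eqref{eq:L(c)}, but to control this sum without the hypothesis $Nk^{\alpha} \geq 2\pi Mn$. Without that hypothesis the Bessel argument $x = 2\pi\sqrt{mn}/c$ can be arbitrarily large, so the series bound on $\mathscr{J}_{k-1}(x)$ derived in Section \ref{sec:proof linnik sieve} is unavailable over part of the $c$-range, and a weaker but uniform bound on $J_{k-1}$ must be used.

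I would split the sum over $c$ at a threshold $c_0 = 2\pi M n / k^{\alpha}$ for an $\alpha$ to be optimized. On the range $c \geq c_0$, the Bessel argument satisfies $x \leq k^{\alpha}/n$ and the analysis of Section \ref{sec:proof linnik sieve} applies verbatim, producing a contribution of order $M \|a\|^2 /(Nk^{\eta(\alpha)})$ with $\eta(\alpha) = k(1-\alpha) - k^{2\alpha-1}/2$; with $\alpha$ close to $1/2$, $\eta(\alpha)$ is of order $k$ and this term is negligible compared with the claimed $k^{1-\varepsilon}$ saving. On the range $N \leq c < c_0$, the main ingredients would be the classical large-sieve bound $|L(c)| \ll (c+M)\|a\|^2 \sup|J_{k-1}|$ together with a uniform Bessel estimate such as $|J_{k-1}(2x)| \ll \min\!\bigl((ex/k)^{k-1},\,(\pi x)^{-1/2}\bigr)$; summing against $c^{-1}$ over multiples of $N$ in $[N, c_0]$ and using $\tau(c) \ll_\varepsilon c^{\varepsilon}$ yields a total contribution of order $M \|a\|^2/(Nk^{1-\varepsilon})$ up to constants depending on $\varepsilon$, absorbing the various logarithmic and divisor losses into that $\varepsilon$.

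The $k = 2$ case is handled separately, in parallel with the final part of Section \ref{sec:proof linnik sieve}. Here there is no exponential decay in $k$ available, only the bound $\mathscr{J}_1(x) \ll x^2$; Weil's bound must be used for the tail $c > M^2$ to ensure convergence, while the classical large sieve handles $N \leq c \leq M^2$, and the harmonic sum $\sum_{c \leq M^2,\, N \mid c} c^{-1} \ll N^{-1}\log M$ produces the extra $\log M$ factor in the statement.

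The main obstacle will be the small-$c$ range: in the absence of $k$-decay from the Bessel function, all of the $k^{1-\varepsilon}$ saving must come from the size of the threshold $c_0$, so divisor-function, $N^\varepsilon$, and logarithmic losses arising from Weil's bound and the $c$-sum need to be tracked carefully and absorbed into the $\varepsilon$-dependent constant. Once $\alpha$ is chosen close to $1/2$ the two ranges match cleanly, giving Theorem \ref{thrm:complete linnik sieve} as stated.
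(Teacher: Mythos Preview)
Your approach is genuinely different from the paper's, and as written it has a gap in the small-$c$ range.

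The paper does not re-enter the Petersson formula at all. Instead it uses a level-raising trick: given $N$ with $Nk^{\alpha} < 2\pi Mn$, choose a prime $p$ so that $2\pi Mn \leq Npk^{\alpha} \leq 4\pi Mn$, apply the already-proved restricted sieve (Theorem \ref{thrm:weighted linnik sieve}) at level $Np$, and then descend back to level $N$ using the inclusion $S_k(N) \subset S_k(Np)$ and the index bound $[\Gamma_0(N):\Gamma_0(Np)] \leq p+1$. This costs exactly a factor $p+1 \asymp M/(Nk^{\alpha})$, so one obtains $(1 + M/(Nk^{\alpha}) + M/(Nk^{\eta}))\|a\|^2$, and for any $\alpha<1$ the $k^{\eta}$ term is negligible for $k$ large. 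Taking $\alpha = 1-\varepsilon$ gives the stated bound. No new Bessel analysis is needed.

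Your direct approach runs into trouble because the two ingredients you name do not combine to give a $k^{1-\varepsilon}$ saving on $N \leq c < c_0$. With $\alpha$ close to $1/2$ the threshold is $c_0 \asymp M/k^{1/2}$, so the length of the $c$-range alone only carries $k^{1/2}$. The remaining factor would have to come from the Bessel bound, but the estimate $|J_{k-1}(2x)| \ll (\pi x)^{-1/2}$ is not uniform in $k$: it fails near the turning point $2x \approx k-1$, where the correct size is $k^{-1/3}$. Since $m,n$ range over all of $[1,M]$, for every $c$ in your small range there are pairs $(m,n)$ with $4\pi\sqrt{mn}/c$ sitting at the turning point, so $\sup_{m,n}|J_{k-1}|$ is only $\ll k^{-1/3}$. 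With the crude bound $|L(c)| \leq (c+M)\|a\|^2 \sup|J_{k-1}|$ this yields at best $M\|a\|^2/(Nk^{1/3})$ after summing $\sum_{c<c_0,\,N\mid c} c^{-1}(c+M)$. (There is also the issue that pulling $\sup|J_{k-1}|$ outside and then invoking the classical large sieve bound $(c+M)$ is not legitimate, since that bound relies on the separated exponential structure of $S(m,n;c)$, which is destroyed once the Bessel weight is present; one would have to fall back on Weil plus $\sum|a_m a_n|$, costing further factors.) A direct Petersson argument can be pushed through, but it requires a much finer treatment of the transition regime than you have sketched; the paper's level-raising device sidesteps all of this.
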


\begin{proof}
We have that $[\Gamma_0(1): \Gamma_0(N)] = N \prod_{p|N}(1+p^{-1})$, the index of $\Gamma_0(N)$ in $\Gamma_0(1)$. Then for a prime $p$ we have,

\begin{equation*}
    [\Gamma_0(N): \Gamma_0(Np)] = p \prod_{\substack{q|p \\ q \nmid N}}(1+q^{-1}) \leq p+1.
\end{equation*}

Further, we may write the volume of the fundamental domain of $\Gamma_0(Np)$ in terms of that of $\Gamma_0(N)$,

\begin{equation*}
    Vol(\Gamma_0(Np) \backslash \mathbb{H} ) = Vol(\Gamma_0(N) \backslash \mathbb{H}) [\Gamma_0(N): \Gamma_0(Np)].
\end{equation*}

Hence if $\mathcal{F}_L$ denotes an orthonormal basis of $S_k(L)$, we have,

\begin{align*}
    \sum_{f \in \mathcal{F}_{Np}} \left| \sum_{m \leq M} a_m \psi_f(m)\right|^2 &= \sum_{f\in \tilde{\mathcal{F}}_{N}} \left| \sum_{m \leq M} a_m \psi_f(m)\right|^2 + \sum_{f\in F_{Np} \backslash \tilde{\mathcal{F}_N}}  \left| \sum_{m \leq M} a_m \psi_f(m)\right|^2 \\
    &\geq \sum_{f \in \tilde{\mathcal{F}}_N} \left| \sum_{m \leq M} a_m \psi_f(m) \right|^2 \\
    &\geq \frac{1}{p+1}\sum_{f \in \mathcal{F}_N} \left| \sum_{m \leq M} a_m \psi_f(m) \right|^2,
\end{align*}

where $\tilde{\mathcal{F}}_N \subseteq \mathcal{F}_{Np}$ is the set of forms in $\mathcal{F}_{Np}$ that also have level $N$. For any $Nk^{\alpha} < 2\pi M n$ let $p$ be a prime such that $2\pi Mn \leq Npk^{\alpha} \leq 4 \pi Mn$. We have from \eqref{eq:Delta},

\begin{align*}
    \sum_{f \in \mathcal{F}_N} \left| \sum_{m \leq M} a_m \psi_f(m) \right|^2 &\leq (p+1)  \sum_{f \in \mathcal{F}_{Np}} \left| \sum_{m \leq M} a_m \psi_f(m)\right|^2 \\
    &\ll (p+1) \left( 1 + \frac{M}{Npk^{\eta}} \right) \| a \|^2 \\
    &\ll \left(1 + \frac{M}{Nk^{\alpha}} + \frac{M}{Nk^{\eta}} \right)\|a\|^2
\end{align*}

Recall $\eta = k(1-\alpha) - k^{2\alpha -1}/2$, thus for $1/2 < \alpha < 1$ we have for large enough $k$,

\begin{equation*}
     \sum_{f \in \mathcal{F}_N} \left| \sum_{m \leq M} a_m \psi_f(m) \right|^2 \ll \left(1 + \frac{M}{Nk^{\alpha}} \right) \|a\|^2.
\end{equation*}

In fact we may say the same for $0 < \alpha \leq 1/2$. As mentioned in Remark \ref{rem: alpha} we may improve the saving in $k$ in \eqref{eq:Delta} for small $\alpha$, but overall we achieve the same saving as above. The $k=2$ case follows similarly. One may also make an analogous statement in terms of Hecke eigenvalues, as in Theorem \ref{thrm:weighted linnik sieve}.

\end{proof}

\newpage
\printbibliography

\end{document}